\newtheorem{theorem}{Theorem}[section]
\newtheorem{lemma}{Lemma}[theorem]
\newtheorem{proposition}{Proposition}[section]
\DeclareMathOperator{\vect}{vec}
\DeclareMathOperator{\etr}{etr}
\begin{document}

\title{Random orthogonal matrices and the Cayley transform}

%\author{Michael Jauch \\
%Department of Statistical Science, Duke University
%\and 
%Peter D. Hoff \\
%Department of Statistical Science, Duke University
%\and 
%David B. Dunson \\
%Department of Statistical Science, Duke University
%}
\author{Michael Jauch, Peter D. Hoff, and David B. Dunson \\ 
Department of Statistical Science \\ 
Duke University
}
\date{\vspace{-5ex}}
\maketitle

\begin{abstract}
Random orthogonal matrices play an important role in probability and statistics, arising in multivariate analysis, directional statistics, and models of physical systems, among other areas. Calculations involving random orthogonal matrices are complicated by their constrained support. Accordingly, we parametrize the Stiefel and Grassmann manifolds, represented as subsets of orthogonal matrices, in terms of Euclidean parameters using the Cayley transform. We derive the necessary Jacobian terms for change of variables formulas. Given a density defined on the Stiefel or Grassmann manifold, these allow us to specify the corresponding density for the Euclidean parameters, and vice versa. As an application, we describe and illustrate through examples a Markov chain Monte Carlo approach to simulating from distributions on the Stiefel and Grassmann manifolds. Finally, we establish an asymptotic independent normal approximation for the distribution of the Euclidean parameters which corresponds to the uniform distribution on the Stiefel manifold. This result contributes to the growing literature on normal approximations to the entries of random orthogonal matrices or transformations thereof.
\end{abstract}

{\bf Keywords:} Stiefel manifold, Grassmann manifold, Gaussian approximation, Markov chain Monte Carlo, Jacobian.

\section{Introduction}

Random orthogonal matrices play an important role in probability and statistics. They arise, for example, in multivariate analysis, directional statistics, and models of physical systems. The set of $p\times k$ orthogonal matrices $\mathcal{V}(k,p) = \{\bm{Q} \in \mathbbm{R}^{p \times k} \mid \bm{Q}^T\bm{Q} = \bm{I}_k\},$ known as the Stiefel manifold, is a $d_{\mathcal{V}}  = pk -k(k+1)/2$ dimensional submanifold of $\mathbbm{R}^{pk}.$ There are two notable special cases: $\mathcal{V}(1,p)$ is equivalent to the unit hypersphere, while $\mathcal{V}(p,p)$ is equivalent to the orthogonal group $\mathcal{O}(p).$ Closely related to the Stiefel manifold is the Grassmann manifold $\mathcal{G}(k,p),$ the set of $k$-dimensional linear subspaces of $\mathbb{R}^{p}.$ The Grassmann manifold has dimension $d_{\mathcal{G}} = (p-k)k.$ Typically, points in the Grassmann manifold are thought of as equivalence classes of $\mathcal{V}(k,p),$ where two orthogonal matrices belong to the same class if they share the same column space or, equivalently, if one matrix can be obtained from the other through right multiplication by an element of $\mathcal{O}(k).$ In Section \ref{Cayley_Grassmann}, we elaborate and expand upon the contributions of \citet{Shepard2015} to provide another representation of the Grassmann manifold $\mathcal{G}(k,p)$ as the subset $\mathcal{V}^{+}(k,p)$ of $p \times k$ orthogonal matrices having a symmetric positive definite (SPD) top block. In this article, we focus on orthogonal matrices having fewer columns than rows.

Both the Stiefel and Grassmann manifolds can be equipped with a uniform probability measure, also know as an invariant or Haar measure. The uniform distribution $P_{\mathcal{V}(k,p)}$ on $\mathcal{V}(k,p)$ is characterized by its invariance to left and right multiplication by orthogonal matrices: If $\bm{Q} \sim P_{\mathcal{V}(k,p)},$ then $\bm{U}\bm{Q}\bm{V} \stackrel{\text{dist.}}{=} \bm{Q}$ for all $\bm{U} \in \mathcal{O}(p)$ and $\bm{V} \in \mathcal{O}(k).$ Letting $l: \mathcal{V}(k,p) \to \mathcal{G}(k,p)$ be the function taking an orthogonal matrix to its column space, the uniform distribution $P_{\mathcal{G}(k,p)}$ on $\mathcal{G}(k,p)$ is the pushforward measure of $P_{\mathcal{V}(k,p)}$ under $l.$ In other words, the measure of $A \subseteq \mathcal{G}(k,p)$ is $P_{\mathcal{G}(k,p)}[A] = P_{\mathcal{V}(k,p)}[l^{-1}(A)].$ The uniform distributions on these manifolds have a long history in probability, as we discuss in Section \ref{normal_apprx_section}, and in statistics, where they appear in foundational work on multivariate theory \citep{James1954}.

Non-uniform distributions on the Stiefel and Grassmann manifolds play an important role in modern statistical applications. They are used to model directions, axes, planes, rotations, and other data lying on compact Riemannian manifolds in the field of directional statistics \citep{Mardia2009}. Also, statistical models having the Stiefel or Grassmann manifold as their parameter space are increasingly common \citep{Hoff2007, Hoff2009, Cook2010}. In particular, Bayesian analyses of multivariate data often involve prior and posterior distributions on $\mathcal{V}(k,p)$ or $\mathcal{G}(k,p).$ Bayesian inference typically requires simulating from these posterior distributions, motivating the development of new Markov chain Monte Carlo (MCMC) methodology \citep{Hoff2009, Byrne2013, Rao2016}.

The challenge of performing calculations with random orthogonal matrices has motivated researchers to parametrize sets of square orthogonal matrices in terms of Euclidean parameters. We provide a few examples. In what \citet{Diaconis2017} identify as the earliest substantial mathematical contribution to modern random matrix theory, \citet{Hurwitz1897} parametrizes the special orthogonal and unitary groups using Euler angles and computes the volumes of their invariant measures. An implication of these computations is that the Euler angles of a uniformly distributed matrix follow independent beta distributions. \citet{Anderson1987} discuss the potential of various parametrizations in the simulation of uniformly distributed square orthogonal matrices. Other authors have made use of parametrizations of square orthogonal or rotation matrices in statistical applications \citep{Leon2006, Cron2016}.

In contrast, the topic of parametrizing random orthogonal matrices having fewer columns than rows has received little attention. The recent work of \citet{Shepard2015} extends four existing approaches to parametrizing square orthogonal matrices to the case when $k < p$ and to the scenario in which only the column space of the orthogonal matrix is of interest. Naturally, this latter scenario is closely related to the Grassmann manifold. The tools needed to use these parametrizations in a probabilistic setting are still largely missing.

In this article, we lay foundations for application of the Cayley parametrization of the Stiefel and Grassmann manifolds in a probabilistic setting. There are three main contributions. First, we elaborate and expand upon the work of \citet{Shepard2015} to show that the Grassmann manifold $\mathcal{G}(k,p)$ can be represented by the subset $\mathcal{V}^{+}(k,p)$ of orthogonal matrices having an SPD top block and that this subset can be parametrized in terms of Euclidean elements using the Cayley transform. Next, we derive the necessary Jacobian terms for change of variables formulas. Given a density defined on $\mathcal{V}(k,p)$ or $\mathcal{V}^{+}(k,p)$, these allow us to specify the corresponding density for the Euclidean parameters, and vice versa. As an application, we describe and illustrate through examples an approach to MCMC simulation from distributions on these sets. Finally, we establish an asymptotic independent normal approximation for the distribution of the Euclidean parameters which corresponds to the uniform distribution on the Stiefel manifold. This result contributes to the growing literature on normal approximations to the entries of random orthogonal matrices or transformations thereof.

Code to replicate the figures in this article and to simulate from distributions on $\mathcal{V}(k,p)$ and $\mathcal{V}^{+}(k,p)$ is available at \url{https://github.com/michaeljauch/cayley}.

\section{Probability distributions on submanifolds of $\mathbbm{R}^n$} \label{prob_manifolds}

In this section, we introduce tools for defining and manipulating probability distributions on a $m$-dimensional submanifold $\mathcal{M}$ of $\mathbbm{R}^n$. In particular, we discuss the reference measure with respect to which we define densities on $\mathcal{M},$ we make precise what it means for us to parametrize $\mathcal{M}$ in terms of Euclidean parameters, and we state a change of variables formula applicable in this setting. Our formulation of these ideas follows that of \citet{Diaconis2013} somewhat closely. This general discussion will form the basis for our handling of the specific cases in which $\mathcal{M}$ is $\mathcal{V}(k,p)$ or $\mathcal{V}^{+}(k,p).$

In order to specify probability distributions on $\mathcal{M}$ in terms of density functions, we need a reference measure on that space, analogous to Lebesgue measure $L^{m}$ on $\mathbbm{R}^m.$ As in \citet{Diaconis2013} and \citet{Byrne2013}, we take the Hausdorff measure as our reference measure. Heuristically, the $m$-dimensional Hausdorff measure of $A \subset \mathbbm{R}^n$ is the $m$-dimensional area of $A.$ More formally, the $m$-dimensional Hausdorff measure $H^m(A)$ of $A$ is defined 
\begin{align*}
    H^m(A) &= \lim_{\delta \to 0} 
    \inf_{\substack{
    A \subset \cup_i S_i \\  
    \text{diam}(S_i) < \delta
    }} \sum_{i} \alpha_m \left(\frac{\text{diam}(S_i)}{2}\right)^m
\end{align*} where the infimum is taken over countable coverings $\{S_i\}_{i\in \mathbbm{N}}$ of $A$ with $$\text{diam}(S_i) =\sup\left\{|x-y|: x,y \in S_i\right\}$$ and $\alpha_m = \Gamma(\frac12)^m/\Gamma(\frac{m}{2}+1),$ the volume of the unit ball in $\mathbbm{R}^n.$ The $d_\mathcal{V}$-dimensional Hausdorff measure on $\mathcal{V}(k,p)$ coincides with $P_{\mathcal{V}(k,p)}$
up to a multiplicative constant.

Let $g: \mathcal{M} \to \mathbbm{R}$ be proportional to a density with respect to the $m$-dimensional Hausdorff measure on $\mathcal{M}.$ Furthermore, suppose $\mathcal{M}$ can be parametrized by a function $f$ from an open domain $\mathcal{D} \in \mathbbm{R}^m$ to $\mathcal{I}=f(\mathcal{D}) \subseteq \mathcal{M}$ satisfying the  following conditions:
\begin{enumerate}
    \item Almost all of $\mathcal{M}$ is contained in the image $\mathcal{I}$ of $\mathcal{D}$ under $f$ so that  $H^m(\mathcal{M}\setminus\mathcal{I}) = 0;$
    \item The function $f$ is injective on $\mathcal{D};$
    \item The function $f$ is continuously differentiable on $\mathcal{D}$ with the derivative matrix $Df(\bm{\phi})$ at $\bm{\phi} \in \mathcal{D}.$
\end{enumerate} In this setting, we obtain a simple change of variables formula. Define the $m$-dimensional Jacobian of $f$ at $\bm{\phi}$ as $J_mf(\bm{\phi})=\left|Df(\bm{\phi})^T Df(\bm{\phi})\right|^{1/2}.$ Like the familiar Jacobian determinant, this term acts as a scaling factor in a change of variables formula. However, it is defined even when the derivative matrix is not square. For more details, see the discussion in \citet{Diaconis2013}. The change of variables formula is given in the following theorem, which is essentially a restatement of the main result of \citet{Traynor1993}: 
\begin{theorem} \label{area_formula}
 For all Borel subsets $A \subset \mathcal{D}$ 
\begin{align*}
   \int_{A} J_m f(\bm{\phi})L^m(d\bm{\phi}) = H^m[f(A)]   
\end{align*}
and hence
\begin{align*}
     \int_{A} g[f(\bm{\phi})] J_m f(\bm{\phi})L^m(d\bm{\phi}) = \int_{f(A)} g(\bm{y}) H^m(d\bm{y}).
\end{align*}
\end{theorem}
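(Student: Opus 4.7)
My plan is to obtain the first identity directly from the classical area formula of geometric measure theory, then derive the second as a standard consequence via measure-theoretic approximation.

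For the first identity, the key observation is that continuous differentiability of $f$ on the open set $\mathcal{D}$ makes $f$ locally Lipschitz, so on any compact subset of $\mathcal{D}$ it satisfies the hypotheses of the area formula as proved by \citet{Traynor1993}. I would exhaust $\mathcal{D}$ by an increasing sequence of compact sets $K_j \uparrow \mathcal{D}$, apply Traynor's area formula to the restriction of $f$ to $K_j$ to obtain
\begin{align*}
\int_{A \cap K_j} J_m f(\bm{\phi})\, L^m(d\bm{\phi}) = H^m\bigl[f(A \cap K_j)\bigr]
\end{align*}
for each Borel $A \subset \mathcal{D}$, and then pass to the limit using monotone convergence on the left side and upward continuity of Hausdorff measure on the right, noting that injectivity of $f$ ensures $f(A \cap K_j) \uparrow f(A)$.

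For the second identity, I would first verify the claim for indicator functions $g = \mathbbm{1}_B$ with $B \subset \mathcal{M}$ Borel. Continuity of $f$ makes $f^{-1}(B)$ Borel, hence $A \cap f^{-1}(B)$ is a Borel subset of $\mathcal{D}$; applying the first identity to this set, and using injectivity of $f$ to rewrite $f(A \cap f^{-1}(B)) = f(A) \cap B$, delivers the equation for indicators. Linearity then extends it to non-negative simple functions, and monotone convergence extends it to arbitrary non-negative measurable $g$, which suffices since the $g$ of interest is proportional to a density and therefore non-negative.

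The only mildly non-routine step is the extension from the compactly supported Lipschitz framework of \citet{Traynor1993} to a $C^1$ map on an open domain, but this is handled cleanly by the exhaustion argument above. Everything else is standard bookkeeping with Borel sets, indicator functions, and monotone convergence.
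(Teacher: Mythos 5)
Your approach coincides with the paper's, which offers no separate proof but simply identifies the theorem as a restatement of the area formula in \citet{Traynor1993}; your exhaustion-by-compacts argument and the indicator/simple-function/monotone-convergence ladder correctly supply the routine details that the paper leaves implicit. One small remark: both $f(A\cap K_j)\uparrow f(A)$ and $f\bigl(A\cap f^{-1}(B)\bigr)=f(A)\cap B$ hold for arbitrary maps, so injectivity is not actually needed at those two spots—it enters only inside the area formula itself, to replace the multiplicity function $N(f|_A,\cdot)$ by the indicator of $f(A)$.
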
 

Naturally, the change of variables formula has an interpretation in terms of random variables. Let $\bm{y}$ be a random element of $\mathcal{M}$ whose distribution has a density proportional to $g.$ Then $\bm{y} \stackrel{\text{dist.}}{=} f(\bm{\phi})$ when the distribution of $\bm{\phi} \in \mathcal{D}$ has a density proportional to $g[f(\bm{\phi})] J_m f(\bm{\phi}).$ 

\section{The Cayley parametrizations} \label{Cayley}

The Cayley transform, as introduced in \citet{Cayley1846}, is a map from  skew-symmetric matrices to special orthogonal matrices. Given $\bm{X}$ in  $\text{Skew}(p) = \{\bm{X} \in \mathbbm{R}^{p \times p} \vert \bm{X} = -\bm{X}^T\},$ the (original) Cayley transform of $\bm{X}$ is the special orthogonal matrix
$$C_{\text{orig.}}(\bm{X}) = (\bm{I}_p + \bm{X})(\bm{I}_p - \bm{X})^{-1}.$$ We work instead with a modified version of the Cayley transform described in \citet{Shepard2015}. In this version, the Cayley transform of $\bm{X}$ is the $p \times k$ orthogonal matrix $$C(\bm{X}) = (\bm{I}_p + \bm{X})(\bm{I}_p -\bm{X})^{-1}\bm{I}_{p\times k}$$ where $\bm{I}_{p\times k}$ denotes the $p\times k$ matrix having the identity matrix as its top block and the remaining entries zero. The matrix $\bm{I}_p - \bm{X}$ is invertible for any $\bm{X} \in \text{Skew}(p)$, so the Cayley transform is defined everywhere. 

In this section, we parametrize (in the sense of Section \ref{prob_manifolds}) the sets $\mathcal{V}(k,p)$ and $\mathcal{V}^{+}(k,p)$ using the Cayley transform $C.$ We are able to parametrize these distinct sets by restricting the domain of $C$ to distinct subsets of $\text{Skew}(p).$ The third condition of the previous section requires that $C$ be continuously differentiable on its domain. We verify this condition by computing the derivative matrices of $C,$ clearing a path for the statement of change of variables formulas in Section \ref{densities}. We also state and discuss an important proposition which justifies our claim that the Grassmann manifold $\mathcal{G}(k,p)$ can be represented by the set $\mathcal{V}^{+}(k,p).$

\subsection{Cayley parametrization of the Stiefel manifold}   \label{Cayley_Stiefel}
To parametrize the Stiefel manifold $\mathcal{V}(k,p),$ the domain of $C$ is restricted to the subset
\begin{align*}\mathcal{D}_{\mathcal{V}} = 
\left\{
\bm{X} = 
\begin{bmatrix}
\bm{B} & -\bm{A}^T \\ 
\bm{A} & \bm{0}_{p-k}
\end{bmatrix} \, \middle| \, \begin{array}{ll} \bm{B} \in \mathbbm{R}^{k \times k}\in \text{Skew}(k) \\ 
\bm{A} \in \mathbbm{R}^{p-k\times k}  \end{array} \right\} \subset \text{Skew}(p).
\end{align*} Let $\bm{X} \in \mathcal{D}_{\mathcal{V}}$ and set $\bm{Q} = C(\bm{X}).$ Partition $\bm{Q} = [\bm{Q}_1^T \quad \bm{Q}_2^T]^T$ so that $\bm{Q}_1$ is square. We can write the blocks of $\bm{Q}$ in terms of the blocks of $\bm{X}$ as
\begin{align*}
    \bm{Q}_1 &= (\bm{I}_k - \bm{A}^T\bm{A} + \bm{B})(\bm{I}_k + \bm{A}^T\bm{A} - \bm{B})^{-1} \\ 
    \bm{Q}_2 &= 2\bm{A}(\bm{I}_k + \bm{A}^T\bm{A} - \bm{B})^{-1}.
\end{align*} The matrix $\bm{I}_k + \bm{A}^T\bm{A} - \bm{B}$ is the sum of a symmetric positive definite matrix and a skew-symmetric matrix and therefore nonsingular (see the appendix for a proof). This observation guarantees (again) that the Cayley transform is defined for all $\bm{X} \in \mathcal{D}_{\mathcal{V}}.$ We can recover the matrices $\bm{A}$ and $\bm{B}$ from $\bm{Q}:$ 
\begin{align} 
    \bm{F} &= (\bm{I}_k - \bm{Q}_1)(\bm{I}_k + \bm{Q}_1)^{-1}  \label{inverse_eqn_F}\\ 
    \bm{B} &= \frac12 (\bm{F}^T - \bm{F})  \label{inverse_eqn_B}\\ 
    \bm{A} &= \frac12 \bm{Q}_2 (\bm{I}_k + \bm{F}). \label{inverse_eqn_A}
\end{align} 

We are now in a position to verify the first two conditions of Section \ref{prob_manifolds}. 
The image of $\mathcal{D}_{V}$ under $C$ is the set $$\mathcal{I}_{\mathcal{V}} = \{\bm{Q}=[\bm{Q}_1^T \quad \bm{Q}_2^T]^T \in \mathcal{V}(k,p) \mid \bm{I}_k + \bm{Q}_1 \text{ is nonsingular}\}$$ which has measure one with respect to $P_{\mathcal{V}(k,p)}.$ The injectivity of $C$ on $\mathcal{D}_{\mathcal{V}}$ follows from the existence of the inverse mapping described in equations \ref{inverse_eqn_F} - \ref{inverse_eqn_A}. All that remains to be verified is the third condition, that $C$ is continuously differentiable on its domain.

As the first step in computing the derivative matrix of $C,$ we define a $d_{\mathcal{V}}$-dimensional vector $\bm{\varphi}$ containing each of the independent entries of $\bm{X}.$ Let $\bm{b}$ be the $k(k-1)/2$-dimensional vector of independent entries of $\bm{B}$ obtained by eliminating diagonal and supradiagonal elements from the vectorization $\vect{\bm{B}}.$ The vector $\bm{\varphi} = (\bm{b}^T, \vect{\bm{A}}^T)^T$ then contains each of the independent entries of $\bm{X}.$ Let $\bm{X}_{\bm{\varphi}} \in \mathcal{D}_{\mathcal{V}}$ be the matrix having $\bm{\varphi}$ as its corresponding vector of independent entries.

The Cayley transform can now be thought of as a function of $\bm{\varphi}:$ 
\begin{align*}
    C(\bm{\varphi}) &= (\bm{I}_p + \bm{X}_{\bm{\varphi}})(\bm{I}_p - \bm{X}_{\bm{\varphi}})^{-1}\bm{I}_{p\times k}.
\end{align*} As a function of $\bm{\varphi},$ the Cayley transform is a bijection between the set $\mathcal{D}_{\mathcal{V}}^{\bm{\varphi}}=\{\bm{\varphi} \in \mathbbm{R}^{d_{\mathcal{V}}} : \bm{X}_{\bm{\varphi}} \in \mathcal{D}_{\mathcal{V}}\} = \mathbbm{R}^{d_{\mathcal{V}}}$ and $\mathcal{I}_{\mathcal{V}}.$ The inverse Cayley transform is defined in the obvious way as the map $C^{-1}: \bm{Q} \mapsto (\bm{b}^T, \vect{\bm{A}}^T)^T$ where $\bm{B}$ and $\bm{A}$ are computed from $\bm{Q}$ according to equations \ref{inverse_eqn_F}-\ref{inverse_eqn_A} and $\bm{b}$ contains the independent entries of $\bm{B}$ as before.

The next lemma provides an explicit linear map $\bm{\Gamma}_{\mathcal{V}}$ from $\bm{\varphi}$ to $\vect{\bm{X}_{\bm{\varphi}}},$ greatly simplifying our calculation of the derivative matrix $DC(\bm{\varphi}).$ The entries of $\bm{\Gamma}_{\mathcal{V}}$ belong to the set $\{-1, 0, 1\}.$ The construction of $\bm{\Gamma}_{\mathcal{V}}$ involves the commutation matrix $\bm{K}_{p,p}$ and the matrix $\widetilde{\bm{D}}_k$ satisfying ${\widetilde{\bm{D}}_k\bm{b} = \vect{\bm{B}},}$ both of which are discussed in \citet{Magnus1988LS} and defined explicitly in Appendix \ref{special_matrices} . Set $\bm{\Theta}_1 = [\bm{I}_k \quad \bm{0}_{k \times p-k}]$ and $\bm{\Theta}_2 = [\bm{0}_{p-k \times k} \quad \bm{I}_{p-k}].$ 
\begin{lemma} \label{lintran_stiefel} The equation ${\vect{X}_{\bm{\varphi}} = \bm{\Gamma}_{\mathcal{V}}\,\bm{\varphi}}$ is satisfied by the matrix
$${\bm{\Gamma}_{\mathcal{V}} = [(\bm{\Theta}_1^T \otimes \bm{\Theta}_1^T)\widetilde{\bm{D}}_k \quad (\bm{I}_{p^2} - \bm{K}_{p,p})(\bm{\Theta}_1^T \otimes \bm{\Theta}_2^T)]}.$$ 
\end{lemma}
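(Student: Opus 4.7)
The plan is to write $\bm{X}_{\bm{\varphi}}$ as an explicit sum of three block placements, corresponding to the top-left block $\bm{B}$, the bottom-left block $\bm{A}$, and the top-right block $-\bm{A}^T$, then vectorize each term using standard Kronecker identities. The matrices $\bm{\Theta}_1^T$ and $\bm{\Theta}_2^T$ act as block selectors: $\bm{\Theta}_1^T\bm{M}\bm{\Theta}_1$ embeds a $k\times k$ matrix $\bm{M}$ into the top-left block of a $p\times p$ matrix, while $\bm{\Theta}_2^T\bm{N}\bm{\Theta}_1$ embeds a $(p-k)\times k$ matrix $\bm{N}$ into the bottom-left block. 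Using these, I would write
\begin{equation*}
\bm{X}_{\bm{\varphi}} = \bm{\Theta}_1^T \bm{B}\, \bm{\Theta}_1 + \bm{\Theta}_2^T \bm{A}\, \bm{\Theta}_1 - \bigl(\bm{\Theta}_2^T \bm{A}\, \bm{\Theta}_1\bigr)^T,
\end{equation*}
and verify directly from the block structure of $\mathcal{D}_{\mathcal{V}}$ that this equals the matrix in the statement.

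Next, I would apply the vec operator to both sides, using the identity $\vect(\bm{M}\bm{N}\bm{P}) = (\bm{P}^T \otimes \bm{M})\vect\bm{N}$ on each summand, and the commutation identity $\vect(\bm{M}^T) = \bm{K}_{p,p}\vect\bm{M}$ on the transposed term. This gives
\begin{align*}
\vect\bm{X}_{\bm{\varphi}} &= (\bm{\Theta}_1^T \otimes \bm{\Theta}_1^T)\vect\bm{B} + (\bm{\Theta}_1^T \otimes \bm{\Theta}_2^T)\vect\bm{A} - \bm{K}_{p,p}(\bm{\Theta}_1^T \otimes \bm{\Theta}_2^T)\vect\bm{A}\\
&= (\bm{\Theta}_1^T \otimes \bm{\Theta}_1^T)\widetilde{\bm{D}}_k\,\bm{b} + (\bm{I}_{p^2} - \bm{K}_{p,p})(\bm{\Theta}_1^T \otimes \bm{\Theta}_2^T)\vect\bm{A},
\end{align*}
where the second line uses the defining property $\widetilde{\bm{D}}_k\bm{b} = \vect\bm{B}$. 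Finally, recognizing the right-hand side as a matrix acting on the stacked vector $\bm{\varphi} = (\bm{b}^T, \vect\bm{A}^T)^T$ yields exactly $\bm{\Gamma}_{\mathcal{V}}\bm{\varphi}$.

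There is no substantive obstacle here; the only thing to be careful about is the convention for $\bm{K}_{p,p}$ (i.e., verifying that $\vect(\bm{M}^T)=\bm{K}_{p,p}\vect\bm{M}$ is the convention of \citet{Magnus1988LS} used in the paper) and the convention for $\widetilde{\bm{D}}_k$, which is designed precisely so that stacking the strict sub-diagonal entries of a skew-symmetric $\bm{B}$ and multiplying by $\widetilde{\bm{D}}_k$ reproduces $\vect\bm{B}$ with the correct signs on the supra-diagonal entries. Once these conventions are in hand, the computation reduces to the block-placement decomposition above and a mechanical application of vec identities.
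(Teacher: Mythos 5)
Your proof is correct. The paper states Lemma \ref{lintran_stiefel} without proof (it is treated as a routine consequence of the vec/Kronecker identities and the definitions of $\bm{K}_{p,p}$ and $\widetilde{\bm{D}}_k$ given in Appendix \ref{special_matrices}), and your block-placement decomposition $\bm{X}_{\bm{\varphi}} = \bm{\Theta}_1^T\bm{B}\bm{\Theta}_1 + \bm{\Theta}_2^T\bm{A}\bm{\Theta}_1 - (\bm{\Theta}_2^T\bm{A}\bm{\Theta}_1)^T$ followed by $\vect(\bm{MNP}) = (\bm{P}^T\otimes\bm{M})\vect\bm{N}$, $\vect(\bm{M}^T) = \bm{K}_{p,p}\vect\bm{M}$, and $\widetilde{\bm{D}}_k\bm{b} = \vect\bm{B}$ is precisely the intended argument, with the conventions matching those of \citet{Magnus1988LS} as used in the paper.
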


With these pieces in place, we can now identify the derivative matrix $C(\bm{\varphi}).$ 
\begin{proposition} \label{jacobian_stiefel} The Cayley transform is continuously differentiable on $\mathcal{D}_{\mathcal{V}}^{\bm{\varphi}} = \mathbbm{R}^{d_{\mathcal{V}}}$ with derivative matrix 
\begin{align*}
    DC(\bm{\varphi}) = 2\left[\bm{I}_{p\times k}^T(\bm{I}_p - \bm{X}_{\bm{\varphi}})^{-T} \otimes (\bm{I}_p - \bm{X}_{\bm{\varphi}})^{-1}\right] \bm{\Gamma}_{\mathcal{V}}.
\end{align*}
\end{proposition}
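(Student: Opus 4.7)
The plan is to treat $C$ as a composition of three maps and apply the chain rule in vectorized form. First, I would rewrite the Cayley transform in a more differentiation-friendly shape using the identity
\begin{align*}
(\bm{I}_p + \bm{X})(\bm{I}_p - \bm{X})^{-1} = 2(\bm{I}_p - \bm{X})^{-1} - \bm{I}_p,
\end{align*}
valid on all of $\mathcal{D}_{\mathcal{V}}$ because $\bm{X} \in \text{Skew}(p)$ has purely imaginary eigenvalues, so $\bm{I}_p - \bm{X}$ is nonsingular. This gives
\begin{align*}
C(\bm{\varphi}) = 2(\bm{I}_p - \bm{X}_{\bm{\varphi}})^{-1}\bm{I}_{p\times k} - \bm{I}_{p\times k}.
\end{align*}

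Next I would address continuous differentiability. The map $\bm{\varphi} \mapsto \bm{X}_{\bm{\varphi}}$ is linear (by Lemma \ref{lintran_stiefel}, with $\vect(\bm{X}_{\bm{\varphi}}) = \bm{\Gamma}_{\mathcal{V}}\bm{\varphi}$), matrix inversion is $C^{\infty}$ on the open set where $\bm{I}_p - \bm{X}$ is invertible, and right multiplication by $\bm{I}_{p\times k}$ is linear. The composition is therefore continuously differentiable on $\mathcal{D}_{\mathcal{V}}^{\bm{\varphi}} = \mathbbm{R}^{d_{\mathcal{V}}}$.

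For the explicit derivative, I would differentiate $(\bm{I}_p - \bm{X})^{-1}$ using the standard identity
\begin{align*}
d\bigl[(\bm{I}_p - \bm{X})^{-1}\bigr] = (\bm{I}_p - \bm{X})^{-1}\,(d\bm{X})\,(\bm{I}_p - \bm{X})^{-1},
\end{align*}
then right-multiply by $\bm{I}_{p\times k}$ and vectorize using $\vect(\bm{A}\bm{B}\bm{C}) = (\bm{C}^T \otimes \bm{A})\vect(\bm{B})$. With $\bm{M} = (\bm{I}_p - \bm{X}_{\bm{\varphi}})^{-1}$, this yields
\begin{align*}
\vect\bigl(dC\bigr) = 2\bigl(\bm{I}_{p\times k}^T \bm{M}^T \otimes \bm{M}\bigr)\vect(d\bm{X}_{\bm{\varphi}}).
\end{align*}
Finally, applying Lemma \ref{lintran_stiefel} to substitute $\vect(d\bm{X}_{\bm{\varphi}}) = \bm{\Gamma}_{\mathcal{V}}\,d\bm{\varphi}$ produces the claimed expression for $DC(\bm{\varphi})$.

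The proof is essentially a chain-rule calculation, so the only real obstacle is bookkeeping: one must be careful that the slimmed representation $\bm{\varphi}$ passes through $\bm{\Gamma}_{\mathcal{V}}$ correctly (in particular that $\bm{\Gamma}_{\mathcal{V}}$ faithfully encodes both the skew-symmetric block $\bm{B}$ and the placement of $\bm{A}$ and $-\bm{A}^T$), and that the two Kronecker factors end up in the order $\bm{I}_{p\times k}^T(\bm{I}_p - \bm{X}_{\bm{\varphi}})^{-T} \otimes (\bm{I}_p - \bm{X}_{\bm{\varphi}})^{-1}$ rather than swapped. Since Lemma \ref{lintran_stiefel} is already in hand and the vectorization identities are standard, nothing deeper is required.
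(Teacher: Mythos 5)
Your proof is correct and follows essentially the same route as the paper: compute the matrix differential $d\bm{Q} = 2(\bm{I}_p - \bm{X}_{\bm{\varphi}})^{-1}(d\bm{X}_{\bm{\varphi}})(\bm{I}_p - \bm{X}_{\bm{\varphi}})^{-1}\bm{I}_{p\times k}$, vectorize via $\vect(\bm{A}\bm{B}\bm{C}) = (\bm{C}^T\otimes\bm{A})\vect(\bm{B})$, and invoke Lemma \ref{lintran_stiefel} to substitute $\vect(d\bm{X}_{\bm{\varphi}}) = \bm{\Gamma}_{\mathcal{V}}\,d\bm{\varphi}$. The affine rewriting $(\bm{I}_p+\bm{X})(\bm{I}_p-\bm{X})^{-1} = 2(\bm{I}_p-\bm{X})^{-1}-\bm{I}_p$ is a tidy shortcut to the differential that the paper obtains by citing Magnus and Neudecker directly, but the underlying calculation and all subsequent steps coincide.
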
 \noindent The form of the derivative matrix reflects the composite structure of the Cayley transform as a function of $\bm{\varphi}.$ The Kronecker product term arises from differentiating $C$ with respect to $\bm{X}_{\bm{\varphi}},$ while the matrix $\bm{\Gamma}_{\mathcal{V}}$ arises from differentiating $\bm{X}_{\bm{\varphi}}$ with respect to $\bm{\varphi}.$

\subsection{The Cayley parametrization of the Grassmann manifold} \label{Cayley_Grassmann}

While the definition of the Grassmann manifold as a collection  of subspaces is fundamental, we often need a more concrete representation. One simple idea is to represent a subspace $S \in \mathcal{G}(k,p)$ by $\bm{Q} \in \mathcal{V}(k,p)$ having $S$ as its column space. However, the choice of $\bm{Q}$ is far from unique. Alternatively, the subspace $S$ can be represented uniquely by the orthogonal projection matrix onto $S,$ as in \citet{Chikuse2003}. We propose instead to represent $\mathcal{G}(k,p)$ by the subset of orthogonal matrices 
$$\mathcal{V}^{+}(k,p) = \left\{\bm{Q} = [\bm{Q}_1^T \, \bm{Q}_2^T]^T \in \mathcal{V}(k,p)\, \middle| \, \bm{Q}_1 \succ \bm{0}\right\}.$$ As the next proposition makes precise, almost every element of $\mathcal{G}(k,p)$ can be represented uniquely by an element of $\mathcal{V}^{+}(k,p).$ Recall that we defined $l: \mathcal{V}(k,p) \to \mathcal{G}(k,p)$ as the map which sends each element of $\mathcal{V}(k,p)$ to its column space.

\begin{proposition} \label{grassmann_bijection}
The map $l$ is injective on $\mathcal{V}^{+}(k,p)$ and  the image of $\mathcal{V}^{+}(k,p)$ under $l$ has measure one with respect to the uniform probability measure $P_{\mathcal{G}(k,p)}$ on $\mathcal{G}(k,p).$ 
\end{proposition}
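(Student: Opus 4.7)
The plan is to handle both assertions via the polar decomposition and its uniqueness, supplemented by the fact that the top block of a uniform Stiefel element is almost surely invertible.

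For injectivity, suppose $\bm{Q}, \bm{Q}' \in \mathcal{V}^{+}(k,p)$ share a column space. Then $\bm{Q}' = \bm{Q}\bm{R}$ for some $\bm{R} \in \mathcal{O}(k)$, so in particular $\bm{Q}_1' = \bm{Q}_1 \bm{R}$. Since $\bm{Q}_1, \bm{Q}_1' \succ \bm{0}$, the factorization $\bm{Q}_1' = \bm{Q}_1 \cdot \bm{R}$ is a left polar decomposition (SPD factor times orthogonal factor) of the invertible matrix $\bm{Q}_1'$, which also admits the trivial polar decomposition $\bm{Q}_1' \cdot \bm{I}_k$. Uniqueness of the polar decomposition forces $\bm{R} = \bm{I}_k$, whence $\bm{Q} = \bm{Q}'$.

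For the measure-one claim, observe that $l^{-1}(l(\mathcal{V}^{+}(k,p)))$ contains every $\bm{Q} \in \mathcal{V}(k,p)$ with invertible $\bm{Q}_1$: indeed, taking the left polar decomposition $\bm{Q}_1 = \bm{P}\bm{U}$ with $\bm{P}$ SPD and $\bm{U} \in \mathcal{O}(k)$ produces $\bm{Q}\bm{U}^T \in \mathcal{V}^{+}(k,p)$ sharing a column space with $\bm{Q}$. It therefore suffices to show that $\{\bm{Q} \in \mathcal{V}(k,p) : \bm{Q}_1 \text{ singular}\}$ has $P_{\mathcal{V}(k,p)}$-measure zero. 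For this I would invoke the standard Gaussian construction of the uniform distribution: if $\bm{Z}$ is a $p \times k$ matrix of independent standard normal entries, then $\bm{Z}(\bm{Z}^T\bm{Z})^{-1/2} \sim P_{\mathcal{V}(k,p)}$, and its top block equals $\bm{Z}_1 (\bm{Z}^T\bm{Z})^{-1/2}$, which is singular if and only if the Gaussian matrix $\bm{Z}_1$ is singular---an event of Lebesgue measure zero since $\bm{Z}_1$ has a density on $\mathbbm{R}^{k \times k}$.

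The main conceptual point is to recognize the polar decomposition as the right mechanism: it simultaneously produces a canonical $\mathcal{V}^{+}(k,p)$-representative of each generic column space (via the SPD factor) and supplies the uniqueness needed for injectivity. The only mildly delicate step is the null-set argument, but it is routine once one appeals to the Gaussian construction of $P_{\mathcal{V}(k,p)}$.
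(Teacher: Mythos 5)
Your proof is correct and follows essentially the same route as the paper: uniqueness of the left polar decomposition gives injectivity, and the measure-one claim reduces to the set $\mathcal{V}^{\text{N}}(k,p)$ of Stiefel matrices with invertible top block. Where the paper uses the SVD of $\bm{Q}_1$ to exhibit a $\mathcal{V}^{+}(k,p)$-representative and asserts the full-measure property of $\mathcal{V}^{\text{N}}(k,p)$ without proof, you use the polar decomposition directly and supply the null-set argument via the Gaussian construction of $P_{\mathcal{V}(k,p)}$ --- both reasonable variants of the same argument.
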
 

In turn, the set $\mathcal{V}^{+}(k,p)$ is amenable to parametrization by the Cayley transform. In this case, the domain of the Cayley transform $C$ is restricted to the subset
$$\mathcal{D}_{\mathcal{G}} = 
\left\{
\bm{X} = 
\begin{bmatrix}
\bm{0} & -\bm{A}^T \\ 
\bm{A} & \bm{0}_{p-k}
\end{bmatrix}
\middle| \begin{array}{ll} \bm{A} \in \mathbbm{R}^{p-k\times k}, \\ \text{eval}_i(\bm{A}^T\bm{A})  \in [0,1) \text{ for } 1 \leq i \leq k \end{array}
\right\} \subset \text{Skew}(p)$$ where the notation $\text{eval}_i(\bm{A}^T\bm{A})$ indicates the $i$th eigenvalue of the matrix $\bm{A}^T\bm{A}.$ Let $\bm{X} \in \mathcal{D}_{\mathcal{G}}$ and set $\bm{Q} = C(\bm{X}).$ Again, partition $\bm{Q} = [\bm{Q}_1^T \quad \bm{Q}_2^T]^T$ so that $\bm{Q}_1$ is square. We can write the blocks of $\bm{Q}$ in terms of $\bm{A}$ as
\begin{align*}
    \bm{Q}_1 &= (\bm{I}_k - \bm{A}^T\bm{A})(\bm{I}_k + \bm{A}^T\bm{A})^{-1} \\ 
    \bm{Q}_2 &= 2\bm{A}(\bm{I}_k + \bm{A}^T\bm{A})^{-1}.
\end{align*} We can recover the matrix $\bm{A}$ from $\bm{Q}:$
\begin{align}
    \bm{F} &= (\bm{I}_k - \bm{Q}_1)(\bm{I}_k + \bm{Q}_1)^{-1} \label{invgrassF}\\
    \bm{A} &= \frac12 \bm{Q}_2 (\bm{I}_k + \bm{F}).\label{invgrassA}
\end{align}

We turn to verifying the conditions of Section \ref{prob_manifolds}. The first two conditions are satisfied as a consequence of the following proposition:  
\begin{proposition} \label{cayley_1to1}
The Cayley transform $C: \mathcal{D}_{\mathcal{G}} \to \mathcal{G}(k,p)$ is one-to-one. 
\end{proposition}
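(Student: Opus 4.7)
The plan is to show that $C$ establishes a bijection from $\mathcal{D}_{\mathcal{G}}$ onto $\mathcal{V}^{+}(k,p)$ and then invoke Proposition \ref{grassmann_bijection} to transfer injectivity and full-measure surjectivity to $\mathcal{G}(k,p)$. The proof splits into three steps.

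First, I would verify that $C(\mathcal{D}_{\mathcal{G}}) \subseteq \mathcal{V}^{+}(k,p)$ directly from the displayed formula $\bm{Q}_1 = (\bm{I}_k - \bm{A}^T\bm{A})(\bm{I}_k + \bm{A}^T\bm{A})^{-1}$. Since $\bm{A}^T\bm{A}$ is symmetric, $\bm{I}_k - \bm{A}^T\bm{A}$ and $(\bm{I}_k + \bm{A}^T\bm{A})^{-1}$ commute and are simultaneously diagonalizable, so $\bm{Q}_1$ is symmetric with eigenvalues of the form $(1-\lambda)/(1+\lambda)$ for $\lambda = \text{eval}_i(\bm{A}^T\bm{A}) \in [0,1)$. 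These lie in $(0,1]$, so $\bm{Q}_1 \succ \bm{0}$ and $C(\bm{X}) \in \mathcal{V}^{+}(k,p)$.

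Second, I would construct a candidate inverse via \eqref{invgrassF} and \eqref{invgrassA}. Given $\bm{Q} \in \mathcal{V}^{+}(k,p)$, the matrix $\bm{I}_k + \bm{Q}_1$ is SPD and hence invertible, so $\bm{F}$ and $\bm{A}$ are well-defined. A short manipulation shows $\bm{I}_k + \bm{F} = 2(\bm{I}_k + \bm{Q}_1)^{-1}$, whence $\bm{A} = \bm{Q}_2(\bm{I}_k + \bm{Q}_1)^{-1}$. Combining this with the Stiefel identity $\bm{Q}_1^2 + \bm{Q}_2^T\bm{Q}_2 = \bm{I}_k$ and the factorization $\bm{I}_k - \bm{Q}_1^2 = (\bm{I}_k - \bm{Q}_1)(\bm{I}_k + \bm{Q}_1)$ yields the clean identity $\bm{A}^T\bm{A} = \bm{F}$. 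Because $\bm{Q}_1^2 \preceq \bm{I}_k$, the eigenvalues of $\bm{Q}_1$ lie in $(0,1]$, so those of $\bm{F}$ lie in $[0,1)$, and the skew-symmetric matrix $\bm{X}$ built from this $\bm{A}$ genuinely belongs to $\mathcal{D}_{\mathcal{G}}$.

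Third, I would confirm that composing $C$ with this inverse map returns the identity on $\mathcal{V}^{+}(k,p)$, which reduces to routine block-matrix algebra using the formulas above. This establishes that $C: \mathcal{D}_{\mathcal{G}} \to \mathcal{V}^{+}(k,p)$ is a bijection; composing with $l$ and applying Proposition \ref{grassmann_bijection}, which tells us $l$ is injective on $\mathcal{V}^{+}(k,p)$ with image of full $P_{\mathcal{G}(k,p)}$-measure, then yields the stated one-to-one property. The main obstacle is the identity $\bm{A}^T\bm{A} = \bm{F}$ together with the eigenvalue bookkeeping needed to place the reconstructed $\bm{X}$ inside $\mathcal{D}_{\mathcal{G}}$ rather than some larger subset of $\text{Skew}(p)$; everything else is either standard matrix calculus or a direct appeal to the preceding proposition.
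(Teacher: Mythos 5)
Your argument follows essentially the same route as the paper: establish a bijection between $\mathcal{D}_{\mathcal{G}}$ and $\mathcal{V}^{+}(k,p)$ by checking in one direction that $\bm{Q}_1 \succ \bm{0}$ when $\text{eval}_i(\bm{A}^T\bm{A}) \in [0,1)$ and in the other that the reconstructed $\bm{A}$ satisfies the same eigenvalue constraint, both via the eigendecomposition of $\bm{Q}_1$ or $\bm{A}^T\bm{A}$. The one genuine simplification you offer is the clean identity $\bm{A}^T\bm{A} = \bm{F}$ (obtained from $\bm{I}_k + \bm{F} = 2(\bm{I}_k + \bm{Q}_1)^{-1}$ and the orthogonality relation $\bm{Q}_2^T\bm{Q}_2 = \bm{I}_k - \bm{Q}_1^2$), which reads off the eigenvalues of $\bm{A}^T\bm{A}$ as $(1-\lambda_i)/(1+\lambda_i)$ directly; the paper instead writes $\bm{A}^T\bm{A} = \tfrac14(\bm{I}_k+\bm{F})^T(\bm{I}_k - \bm{Q}_1^T\bm{Q}_1)(\bm{I}_k+\bm{F})$ and simplifies the resulting eigenvalue expression $\tfrac14(1-\lambda_i^2)\bigl(1 + \tfrac{1-\lambda_i}{1+\lambda_i}\bigr)^2$ to the same quantity. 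Your additional final step of composing with $l$ and invoking Proposition \ref{grassmann_bijection} makes explicit the passage from $\mathcal{V}^{+}(k,p)$ to $\mathcal{G}(k,p)$ that the paper leaves implicit.
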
 \noindent Only the third condition, that $C$ is continuously differentiable on $\mathcal{D}_{G},$ remains.  

The process of computing the derivative matrix is the same as before. We define a $d_{\mathcal{G}}$-dimensional vector $\bm{\psi}=\vect{\bm{A}}$ containing each of the independent entries of $\bm{X}.$ Let $\bm{X}_{\bm{\psi}} \in \mathcal{D}_{\mathcal{G}}$ be the matrix having $\bm{\psi}$ as its corresponding vector of independent entries. The Cayley transform can now be thought of as a function of $\bm{\psi}:$ 
\begin{align*}
    C(\bm{\psi}) &= (\bm{I}_p + \bm{X}_{\bm{\psi}})(\bm{I}_p - \bm{X}_{\bm{\psi}})^{-1}\bm{I}_{p\times k}. 
\end{align*} As a function of $\bm{\psi},$ the Cayley transform is a bijection between the set $\mathcal{D}_{\mathcal{G}}^{\bm{\psi}}=\{\bm{\psi} \in \mathbbm{R}^{d_{\mathcal{G}}} : \bm{X}_{\bm{\psi}} \in \mathcal{D}_{\mathcal{G}}\}$ and $\mathcal{V}^{+}(k,p).$ The next lemma provides an explicit linear map from $\bm{\psi}$ to $\vect{\bm{X}_{\bm{\psi}}}$ in the form of a $\{-1,0,1\}$ matrix $\bm{\Gamma}_{\mathcal{G}}:$ 
\begin{lemma} \label{lintran_grassmann}
The equation ${\vect{X}_{\bm{\psi}} = \bm{\Gamma}_{\mathcal{G}}\bm{\psi}}$ is satisfied by the matrix $${\bm{\Gamma}_{\mathcal{G}} =  (\bm{I}_{p^2} - \bm{K}_{p,p})(\bm{\Theta}_1^T \otimes \bm{\Theta}_2^T)}.$$ 
\end{lemma}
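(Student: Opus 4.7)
The plan is to verify the identity by a direct calculation that parallels (but is simpler than) the Stiefel case in Lemma~\ref{lintran_stiefel}, exploiting the fact that $\bm{B}$ is absent here so only the off-diagonal block $\bm{A}$ contributes. The two tools I will use are the standard identities $\vect(\bm{P}\bm{Q}\bm{R}) = (\bm{R}^T \otimes \bm{P})\vect(\bm{Q})$ and $\bm{K}_{p,p}\vect(\bm{M}) = \vect(\bm{M}^T)$ for any $p\times p$ matrix $\bm{M}$, both of which are collected in the appendix.

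First, I would observe that the lower-left block $\bm{A}$ can be embedded in a $p\times p$ matrix using the selectors $\bm{\Theta}_1$ and $\bm{\Theta}_2$. Specifically, by the definitions $\bm{\Theta}_1 = [\bm{I}_k \ \bm{0}]$ and $\bm{\Theta}_2 = [\bm{0}\ \bm{I}_{p-k}]$, the matrix $\bm{\Theta}_2^T \bm{A}\, \bm{\Theta}_1$ is exactly the $p\times p$ matrix with $\bm{A}$ in its lower-left block and zeros elsewhere. Its transpose $\bm{\Theta}_1^T \bm{A}^T \bm{\Theta}_2$ places $\bm{A}^T$ in the upper-right block. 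Hence
\begin{equation*}
    \bm{X}_{\bm{\psi}} \;=\; \bm{\Theta}_2^T \bm{A}\, \bm{\Theta}_1 \;-\; \bigl(\bm{\Theta}_2^T \bm{A}\, \bm{\Theta}_1\bigr)^T.
\end{equation*}

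Next, I would apply $\vect$ to both sides. By the triple product identity,
\begin{equation*}
    \vect\bigl(\bm{\Theta}_2^T \bm{A}\, \bm{\Theta}_1\bigr) \;=\; (\bm{\Theta}_1^T \otimes \bm{\Theta}_2^T)\, \vect(\bm{A}) \;=\; (\bm{\Theta}_1^T \otimes \bm{\Theta}_2^T)\,\bm{\psi},
\end{equation*}
and by the commutation matrix identity,
\begin{equation*}
    \vect\bigl((\bm{\Theta}_2^T \bm{A}\, \bm{\Theta}_1)^T\bigr) \;=\; \bm{K}_{p,p}\,(\bm{\Theta}_1^T \otimes \bm{\Theta}_2^T)\,\bm{\psi}.
\end{equation*}
Subtracting and factoring out $\bm{\psi}$ yields
\begin{equation*}
    \vect(\bm{X}_{\bm{\psi}}) \;=\; (\bm{I}_{p^2} - \bm{K}_{p,p})(\bm{\Theta}_1^T \otimes \bm{\Theta}_2^T)\,\bm{\psi} \;=\; \bm{\Gamma}_{\mathcal{G}}\,\bm{\psi},
\end{equation*}
as claimed. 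The entries of $\bm{\Gamma}_{\mathcal{G}}$ lie in $\{-1,0,1\}$ because $\bm{K}_{p,p}$ is a permutation matrix and the Kronecker product of two $\{0,1\}$ selector matrices is again a $\{0,1\}$ matrix.

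There is no real obstacle in this argument; it is essentially a bookkeeping exercise. The only subtlety is making sure that the roles of $\bm{\Theta}_1, \bm{\Theta}_2$ in the Kronecker product are assigned correctly, since a sign error or a swap would produce a matrix that places $\bm{A}$ in the wrong block. Double-checking on a small example (say $k=1$, $p=2$) would confirm the orientation.
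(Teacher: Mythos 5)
Your proof is correct. The paper states Lemma \ref{lintran_grassmann} (and its companion Lemma \ref{lintran_stiefel}) without an explicit proof, relying on the reader to supply precisely the kind of verification you give: write $\bm{X}_{\bm{\psi}} = \bm{\Theta}_2^T\bm{A}\bm{\Theta}_1 - (\bm{\Theta}_2^T\bm{A}\bm{\Theta}_1)^T$, vectorize using $\vect(\bm{P}\bm{Q}\bm{R}) = (\bm{R}^T\otimes\bm{P})\vect(\bm{Q})$, and pull the transpose out with $\bm{K}_{p,p}$. This is the intended argument, and the tools you invoke are exactly those the paper collects in its appendix on special matrices.
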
 \noindent In the next proposition, we identify the derivative matrix $DC(\phi).$
\begin{proposition}\label{jacobian_grassmann} The Cayley transform is continuously differentiable on $\mathcal{D}_{\mathcal{G}}^{\bm{\psi}}$ with derivative matrix 
\begin{align*}
    DC(\bm{\psi}) = 2\left[\bm{I}_{p\times k}^T(\bm{I}_p - \bm{X}_{\bm{\psi}})^{-T} \otimes (\bm{I}_p - \bm{X}_{\bm{\psi}})^{-1}\right] \bm{\Gamma}_{\mathcal{G}}.
\end{align*}
\end{proposition}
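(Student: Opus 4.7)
The plan is to view $C(\bm{\psi})$ as the composition of two maps: the linear map $\bm{\psi} \mapsto \vect{\bm{X}_{\bm{\psi}}}$ from Lemma \ref{lintran_grassmann} and the rational matrix map $\bm{X} \mapsto (\bm{I}_p + \bm{X})(\bm{I}_p - \bm{X})^{-1} \bm{I}_{p \times k}$. Since the first map is linear its derivative matrix is simply $\bm{\Gamma}_{\mathcal{G}}$, so by the chain rule it suffices to differentiate the second map with respect to $\vect{\bm{X}}$ and multiply the resulting Jacobian on the right by $\bm{\Gamma}_{\mathcal{G}}$.

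To differentiate the second map, I would first rewrite
\begin{align*}
(\bm{I}_p + \bm{X})(\bm{I}_p - \bm{X})^{-1} = -\bm{I}_p + 2(\bm{I}_p - \bm{X})^{-1},
\end{align*}
using the identity $\bm{I}_p + \bm{X} = 2\bm{I}_p - (\bm{I}_p - \bm{X})$. Then the standard differential of the matrix inverse gives
\begin{align*}
dC(\bm{X}) = 2(\bm{I}_p - \bm{X})^{-1} (d\bm{X}) (\bm{I}_p - \bm{X})^{-1} \bm{I}_{p \times k}.
\end{align*}
Vectorizing via $\vect{ABC} = (C^T \otimes A)\vect{B}$ yields
\begin{align*}
\vect{dC(\bm{X})} = 2\bigl[\bm{I}_{p\times k}^T(\bm{I}_p - \bm{X})^{-T} \otimes (\bm{I}_p - \bm{X})^{-1}\bigr]\,\vect{d\bm{X}}.
\end{align*}

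Applying the chain rule together with Lemma \ref{lintran_grassmann} then produces the claimed formula for $DC(\bm{\psi})$. Continuous differentiability on $\mathcal{D}_{\mathcal{G}}^{\bm{\psi}}$ follows because $\bm{\psi} \mapsto \bm{X}_{\bm{\psi}}$ is linear, matrix inversion is $C^\infty$ wherever it is defined, and $\bm{I}_p - \bm{X}$ is invertible for every $\bm{X} \in \text{Skew}(p)$ (noted after the inverse formulas \ref{inverse_eqn_F}--\ref{inverse_eqn_A}); hence all entries of the Kronecker product depend smoothly on $\bm{\psi}$. The argument mirrors exactly that of Proposition \ref{jacobian_stiefel}, the only change being that the embedding matrix $\bm{\Gamma}_{\mathcal{V}}$ is replaced by $\bm{\Gamma}_{\mathcal{G}}$. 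There is no substantive obstacle here; the main care required is simply keeping track of vectorization and transpose conventions when converting the matrix differential into the Kronecker-product Jacobian.
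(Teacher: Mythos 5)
Your proof is correct and follows essentially the same route as the paper: compute the matrix differential $d\bm{Q} = 2(\bm{I}_p - \bm{X})^{-1}(d\bm{X})(\bm{I}_p - \bm{X})^{-1}\bm{I}_{p\times k}$, vectorize via the Kronecker identity, and postcompose with the linear map $\bm{\Gamma}_{\mathcal{G}}$ from Lemma \ref{lintran_grassmann}; the paper simply cites Magnus and Neudecker for the differential and declares the argument identical to Proposition \ref{jacobian_stiefel} with $\bm{\Gamma}_{\mathcal{V}}$ replaced by $\bm{\Gamma}_{\mathcal{G}}$. The only (welcome) addition on your part is making the differential computation explicit via the decomposition $(\bm{I}_p + \bm{X})(\bm{I}_p - \bm{X})^{-1} = -\bm{I}_p + 2(\bm{I}_p - \bm{X})^{-1}$ and spelling out the smoothness argument that the paper leaves implicit.
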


\section{Change of variables formulas} \label{densities}

We now state change of variables formulas for the Cayley parametrizations of $\mathcal{V}(k,p)$ and $\mathcal{V}^{+}(k,p).$ Given the results of the previous section, the formulas follow directly from Theorem \ref{area_formula}. The $d_{\mathcal{V}}$-dimensional Jacobian $J_{d_{\mathcal{V}}}C(\bm{\varphi})$ of the Cayley transform $C$ at $\bm{\varphi}$ is equal to
\begin{align*}
    J_{d_{\mathcal{V}}}C(\bm{\varphi}) &= \left|DC(\bm{\varphi})^T DC(\bm{\varphi})\right|^{1/2} \\ 
    &= \left|2^2\, \bm{\Gamma}_{\mathcal{V}}^T \,(\bm{G}_{\mathcal{V}} \otimes \bm{H}_{\mathcal{V}})\, \bm{\Gamma}_{\mathcal{V}}\right|^{1/2}
\end{align*} where 
\begin{align*}
    \bm{G}_{\mathcal{V}} &= (\bm{I}_p - \bm{X}_{\bm{\varphi}})^{-1}\bm{I}_{p\times k}\bm{I}_{p\times k}^T (\bm{I}_p - \bm{X}_{\bm{\varphi}})^{-T}\\ 
    \bm{H}_{\mathcal{V}} &= (\bm{I}_p - \bm{X}_{\bm{\varphi}})^{-T}(\bm{I}_p - \bm{X}_{\bm{\varphi}})^{-1}
\end{align*} and $\bm{\Gamma}_{\mathcal{V}}$ is defined as in Section \ref{Cayley_Stiefel}. The equations above hold for the $d_{\mathcal{G}}$-dimensional Jacobian $J_{d_{\mathcal{G}}}C(\bm{\phi})$ if we replace the symbols $\mathcal{V}$ and $\bm{\varphi}$ with the symbols $\mathcal{G}$ and $\bm{\psi},$ respectively.  In the supplementary material, we describe how to compute the Jacobian terms, taking advantage of their block structure. Let $g: \mathcal{V}(k,p) \to \mathbbm{R}$ be proportional to a density with respect to the $d_{\mathcal{V}}$-dimensional Hausdorff measure on $\mathcal{V}(k,p).$ 
\begin{theorem} (Change of Variables Formulas) \label{Euclidean_densities} For all Borel sets $A \subset \mathcal{D}_{\mathcal{V}}^{\bm{\varphi}}$  
\begin{align}
      \int_{A} J_{d_{\mathcal{V}}} C(\bm{\varphi})L^{d_{\mathcal{V}}}(d\bm{\varphi}) = H^{d_{\mathcal{V}}}[C(A)]
\end{align} and hence
\begin{align}
     \int_{A} g\left[C(\bm{\varphi})\right] J_{d_{\mathcal{V}}} C(\bm{\varphi})L^{d_{\mathcal{V}}}(d\bm{\varphi}) = \int_{C(A)} g(\bm{Q}) H^{d_{\mathcal{V}}}(d\bm{Q}). 
\end{align} If instead we have $g: \mathcal{V}^{+}(k,p) \to \mathbbm{R}$ proportional to a density with respect to the $d_{\mathcal{G}}$-dimensional Hausdorff measure on $\mathcal{V}^{+}(k,p),$ the statement is true when we  replace the symbols $\mathcal{V}$ and $\bm{\varphi}$ with the symbols $\mathcal{G}$ and $\bm{\psi},$ respectively.
\end{theorem}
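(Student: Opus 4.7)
The plan is to apply Theorem \ref{area_formula} directly with $f = C$, once the three conditions of Section \ref{prob_manifolds} are verified for each of the two parametrizations. For the Stiefel case, I would set $\mathcal{M} = \mathcal{V}(k,p)$ equipped with $H^{d_{\mathcal{V}}}$, $\mathcal{D} = \mathcal{D}_{\mathcal{V}}^{\bm{\varphi}} = \mathbbm{R}^{d_{\mathcal{V}}}$, and $m = d_{\mathcal{V}}$. Each condition is already in hand from Section \ref{Cayley_Stiefel}: condition (1) follows from $P_{\mathcal{V}(k,p)}(\mathcal{I}_{\mathcal{V}}) = 1$ together with proportionality of $P_{\mathcal{V}(k,p)}$ and $H^{d_{\mathcal{V}}}$, which gives $H^{d_{\mathcal{V}}}(\mathcal{V}(k,p)\setminus \mathcal{I}_{\mathcal{V}}) = 0$; condition (2) follows from the explicit inverse (\ref{inverse_eqn_F})--(\ref{inverse_eqn_A}); and condition (3) is Proposition \ref{jacobian_stiefel}. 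Theorem \ref{area_formula} then yields both displayed identities immediately.

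The stated form of the Jacobian is a short algebraic manipulation. Substituting the derivative matrix from Proposition \ref{jacobian_stiefel} into $J_{d_{\mathcal{V}}}C(\bm{\varphi}) = |DC(\bm{\varphi})^T DC(\bm{\varphi})|^{1/2}$, pulling the outer $\bm{\Gamma}_{\mathcal{V}}^T$ and $\bm{\Gamma}_{\mathcal{V}}$ factors to the outside, and then applying $(A\otimes B)^T = A^T \otimes B^T$ and the mixed-product identity $(A\otimes B)(C\otimes D) = AC\otimes BD$ collapses the central factor to $\bm{G}_{\mathcal{V}} \otimes \bm{H}_{\mathcal{V}}$, yielding the claimed expression.

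The Grassmann case runs along the same lines with $\mathcal{M} = \mathcal{V}^+(k,p)$ equipped with $H^{d_{\mathcal{G}}}$, $\mathcal{D} = \mathcal{D}_{\mathcal{G}}^{\bm{\psi}}$, and $m = d_{\mathcal{G}}$. Condition (2) is precisely Proposition \ref{cayley_1to1} and condition (3) is Proposition \ref{jacobian_grassmann}. The one step requiring care, and the most plausible source of difficulty, is condition (1): I need to translate Proposition \ref{grassmann_bijection}, which is phrased in terms of $l$ and the uniform measure on $\mathcal{G}(k,p)$, into a null-set statement about $H^{d_{\mathcal{G}}}$ on $\mathcal{V}^+(k,p)$. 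This uses that $\mathcal{V}^+(k,p)$ is a $d_{\mathcal{G}}$-dimensional submanifold of $\mathbbm{R}^{pk}$ (the symmetry constraint on the top block cuts the dimension of $\mathcal{V}(k,p)$ down from $d_{\mathcal{V}}$ to $d_{\mathcal{G}}$) and that $l$ restricts to a diffeomorphism from $\mathcal{V}^+(k,p)$ onto its image, so that $H^{d_{\mathcal{G}}}$-null sets in $\mathcal{V}^+(k,p)$ correspond to $P_{\mathcal{G}(k,p)}$-null sets under $l$. With that bookkeeping, the image of $\mathcal{D}_{\mathcal{G}}^{\bm{\psi}}$ under $C$ has full $H^{d_{\mathcal{G}}}$-measure in $\mathcal{V}^+(k,p)$, and a final application of Theorem \ref{area_formula} completes the Grassmann statement.
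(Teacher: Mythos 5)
Your overall approach---apply Theorem \ref{area_formula} with $f = C$, verifying the three conditions of Section \ref{prob_manifolds} from the results of Section \ref{Cayley}---is exactly what the paper does; the paper's own proof consists of the single remark that the formulas ``follow directly from Theorem \ref{area_formula},'' and your account of the Stiefel case fills in that bookkeeping correctly, including the null-set translation from $P_{\mathcal{V}(k,p)}$ to $H^{d_{\mathcal{V}}}$ and the Kronecker-product manipulation for the Jacobian.

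One overcomplication worth correcting: in the Grassmann case you flag condition (1) as the subtle step and propose translating Proposition \ref{grassmann_bijection} into a Hausdorff-measure statement on $\mathcal{V}^{+}(k,p)$ via a diffeomorphism argument. This is unnecessary. As stated in Section \ref{Cayley_Grassmann} and established by Proposition \ref{cayley_1to1}, the Cayley transform is a \emph{bijection} between $\mathcal{D}_{\mathcal{G}}^{\bm{\psi}}$ and all of $\mathcal{V}^{+}(k,p)$, not merely a full-measure subset, so $\mathcal{I} = \mathcal{V}^{+}(k,p)$ and $H^{d_{\mathcal{G}}}(\mathcal{V}^{+}(k,p)\setminus\mathcal{I}) = 0$ holds trivially. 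Proposition \ref{grassmann_bijection} addresses a logically separate question---that $\mathcal{V}^{+}(k,p)$ represents almost all of $\mathcal{G}(k,p)$ under $l$---and plays no role in verifying condition (1) for the change-of-variables theorem, which concerns only the parametrization of $\mathcal{V}^{+}(k,p)$ itself. Your conclusion is still correct, but the route you describe solves a problem that does not arise.
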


Similarly to Theorem \ref{area_formula}, Theorem \ref{Euclidean_densities} has an interpretation in terms of random variables. Let the distribution of $\bm{Q} \in \mathcal{V}(k,p)$ have a density proportional to $g.$ Then $\bm{Q} \stackrel{\text{dist.}}{=} C(\bm{\varphi})$ when the distribution of $\bm{\varphi} \in \mathcal{D}_{\mathcal{V}}^{\bm{\varphi}} = \mathbbm{R}^{d_{\mathcal{V}}}$ has a density proportional to $g\left[C(\bm{\varphi})\right] J_{d_{\mathcal{V}}} C(\bm{\varphi}).$ In particular, let $g \propto 1$ so that $\bm{Q} \sim P_{\mathcal{V}(k,p)}.$ Then $\bm{Q} \stackrel{\text{dist.}}{=} C(\bm{\varphi})$ when the distribution of $\bm{\varphi}$ has a density proportional to $J_{d_{\mathcal{V}}} C(\bm{\varphi}).$ Analogous statements hold when $\bm{Q}$ is a random element of $\mathcal{V}^{+}(k,p.)$
 
\section{Simulating from $\mathcal{V}(k,p)$ and $\mathcal{V}^{+}(k,p)$} \label{simulation_section}

Practical applications often require simulating a random orthogonal matrix $\bm{Q}$ whose distribution has a prescribed density $g.$ For instance, Bayesian analyses of statistical models with an orthogonal matrix parameter yield posterior densities on the Stiefel manifold, and inference typically requires simulating from these densities. In many cases, generating independent samples is too challenging and MCMC methods are the most sensible option. 

In this section, we present an MCMC approach to simulating from a distribution having a density on the set $\mathcal{V}(k,p)$ or $\mathcal{V}^{+}(k,p)$ which takes advantage of the Cayley parametrizations described in Section \ref{Cayley}. The recent work of \citet{Pourzanjani2017} explores a similar idea based on a Givens rotation parametrization of the Stiefel manifold. When it is not too computationally expensive, our approach may have certain advantages over existing methods. Unlike \citet{Hoff2009}, it can be applied regardless of whether conditional distributions belong to a particular parametric family, and it is arguably simpler to implement than the approach of \citet{Byrne2013}. In statistical applications where interest lies in a subspace rather than a particular orthogonal basis, our representation of the Grassmann manifold $\mathcal{G}(k,p)$ by the set $\mathcal{V}^{+}(k,p)$ may suggest an appealing parametrization, with the MCMC approach of this section offering the machinery for Bayesian inference. 

We illustrate the basic idea with the Stiefel manifold. (Simulating from $\mathcal{V}^{+}(k,p)$ involves analogous steps.) In order to simulate $\bm{Q}$ whose distribution has density $g$ on the Stiefel manifold $\mathcal{V}(k,p),$ we construct a Markov chain whose stationary distribution has density $g\left[C(\bm{\varphi})\right] J_{d_{\mathcal{V}}} C(\bm{\varphi})$ on the set $\mathcal{D}_{\mathcal{V}}^{\bm{\varphi}} = \mathbbm{R}^{d_{\mathcal{V}}}.$ Then we simply transform the realized Markov chain back to $\mathcal{V}(k,p)$ using the Cayley transform. By doing so, we avoid the difficulty of choosing and simulating from an efficient proposal distribution defined on the Stiefel manifold.

To make things more concrete, we describe the approach with the Metropolis-Hastings algorithm as our MCMC method. We start with an initial value $\bm{\varphi}_0 \in \mathcal{D}_{\mathcal{V}}^{\bm{\varphi}}$ for our chain and a density $q(\bm{\varphi}' \vert \bm{\varphi})$ for the proposal $\bm{\varphi}'$ given the previous value $\bm{\varphi}.$ The Metropolis-Hastings algorithm for simulating from the distribution having density $g\left[C(\bm{\varphi})\right] J_{d_{\mathcal{V}}} C(\bm{\varphi})$ on $\mathcal{D}_{\mathcal{V}}^{\bm{\varphi}}$ proceeds as follows. For $t=0, ..., T:$
\begin{enumerate}
    \item Generate $\bm{\varphi}'$ from $q(\bm{\varphi}' \vert \bm{\varphi}_t).$
    \item Compute the acceptance ratio 
    \begin{align*}
        r &= \frac{g\left[C(\bm{\varphi}')\right]J_{d_{\mathcal{V}}}C(\bm{\varphi}')}{g\left[C(\bm{\varphi}_t)\right]J_{d_{\mathcal{V}}}C(\bm{\varphi}_t)}\frac{q(\bm{\varphi}_t \vert \bm{\varphi}')}{q(\bm{\varphi}' \vert \bm{\varphi}_t)}.
    \end{align*}
    \item Sample $u \sim \text{Unif}(0,1).$ If $u \leq r,$ set $\bm{\varphi}_{t+1} = \bm{\varphi}'.$ Otherwise, set $\bm{\varphi}_{t+1} = \bm{\varphi}_{t}.$
\end{enumerate} For a broad class of $g$ and $q,$ the orthogonal matrices $\{C(\bm{\varphi}_{t})\}_{t=0}^{T}$ approximate the distribution having density $g$ when $T$ is large enough.

In place of this simple Metropolis-Hastings algorithm, we can substitute other MCMC methods. Hamiltonian Monte Carlo (HMC) \citep{Neal2010}, with implementations in software such as \citet{Carpenter2017} and \citet{Salvatier2016}, is a natural choice. For settings in which evaluation and automatic differentiation of the Jacobian term are not prohibitively expensive, the Cayley transform approach offers a relatively straightforward path to MCMC simulation on the sets $\mathcal{V}(k,p)$ and $\mathcal{V}^{+}(k,p).$

\subsection{Example: The uniform distribution on $\mathcal{V}(k,p)$}

\begin{figure}
\centerline{\includegraphics[width=5.5in]{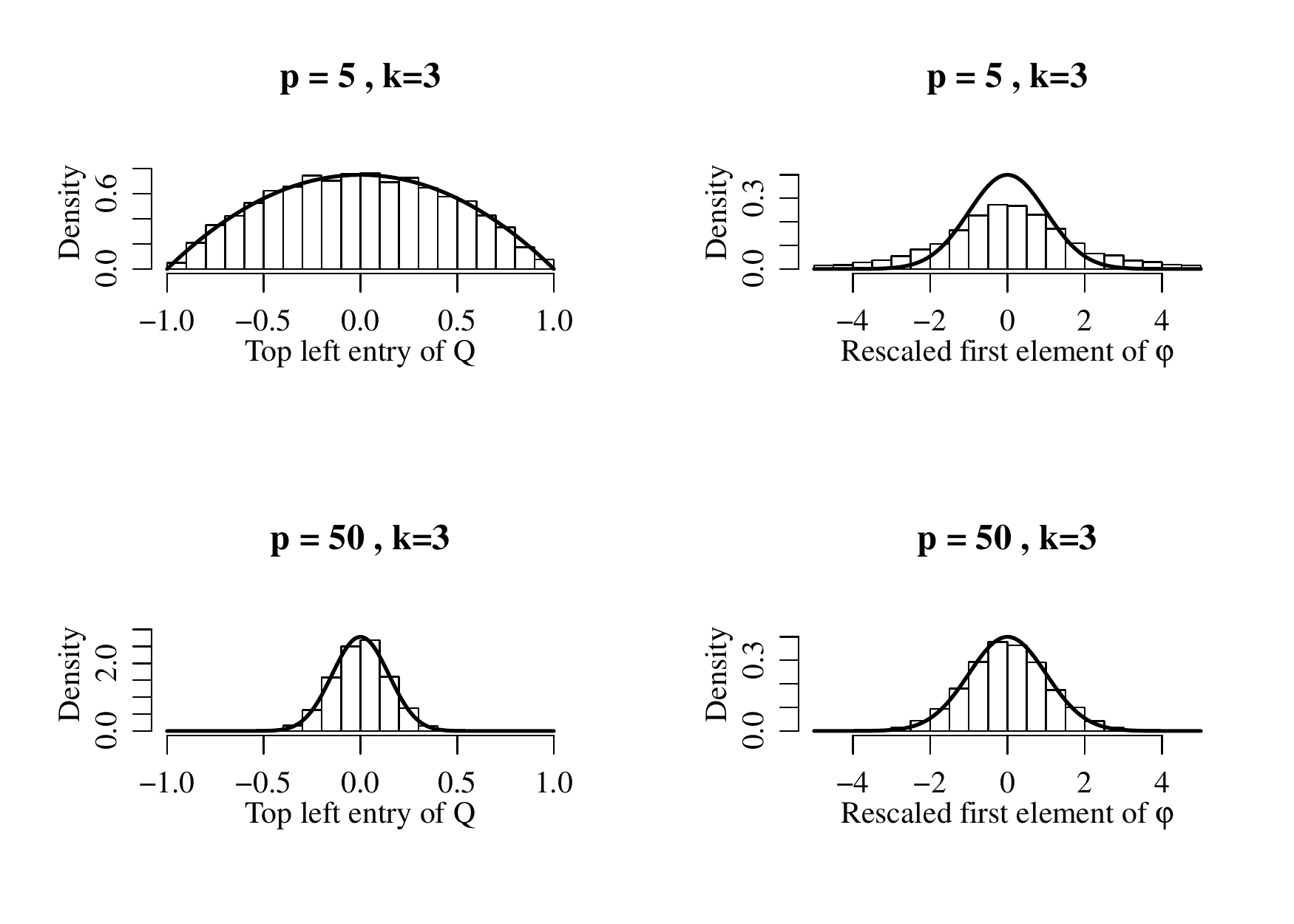}}
\caption{The top left panel is a histogram of the top left entry of simulated values of $\bm{Q} \sim P_{\mathcal{V}(3,5)}$ plotted against the exact density. The bottom left panel is the analogous plot for $\bm{Q} \sim P_{\mathcal{V}(3,50)}.$ The top right panel is a histogram of the first entry, rescaled by $\sqrt{p/2},$ of $\bm{\varphi} = C^{-1}(\bm{Q})$ when $p=5$ and $k=3.$ The histogram is plotted against a standard normal density. The bottom right panel is the analogous plot for the case when $p=50$ and $k=3.$}
\label{fig:uniform_sim}
\end{figure}

Using the MCMC approach described above, we simulate from the uniform distribution on $\mathcal{V}(k,p).$ Specifically, we use HMC as implemented in Stan \citep{Carpenter2017} to simulate $\bm{Q} \sim P_{\mathcal{V}(k,p)}.$ Of course, there exist many algorithms for the simulation of independent, uniformly distributed orthogonal matrices. The uniform distribution only serves as a starting point for illustrating the proposed approach.  

Figure \ref{fig:uniform_sim} provides plots based on $10,000$ simulated values of $\bm{Q} \sim P_{\mathcal{V}(k,p)}.$ The top row of the figure deals with the case $p=5$ and $k=3,$ while the bottom row deals with the case $p=50$ and $k=3.$ The histograms on the left show the top left entry of the simulated values of $\bm{Q}$ plotted against the exact density, given in Proposition 7.3 of \citet{Eaton1989}. As we expect, there is close agreement between the histogram density estimate and the exact density. The histograms on the right show the first entry, rescaled by $\sqrt{p/2},$ of the simulated values of the vector $\bm{\varphi} = C^{-1}(Q).$ These are plotted against a standard normal density. Theorem \ref{normal_apprx_theorem} tells us that the histogram density estimate and the standard normal density should agree when $p$ is large (both in an absolute sense and relative to k), which is what we observe in the plot on the bottom right. When $k$ is similar in magnitude to $p,$ the standard normal density is a poor approximation, as we see in the plot on the top right. 

\subsection{Example: Bayesian inference for the spiked covariance model}

\begin{figure}
\centerline{\includegraphics[width=5.5in]{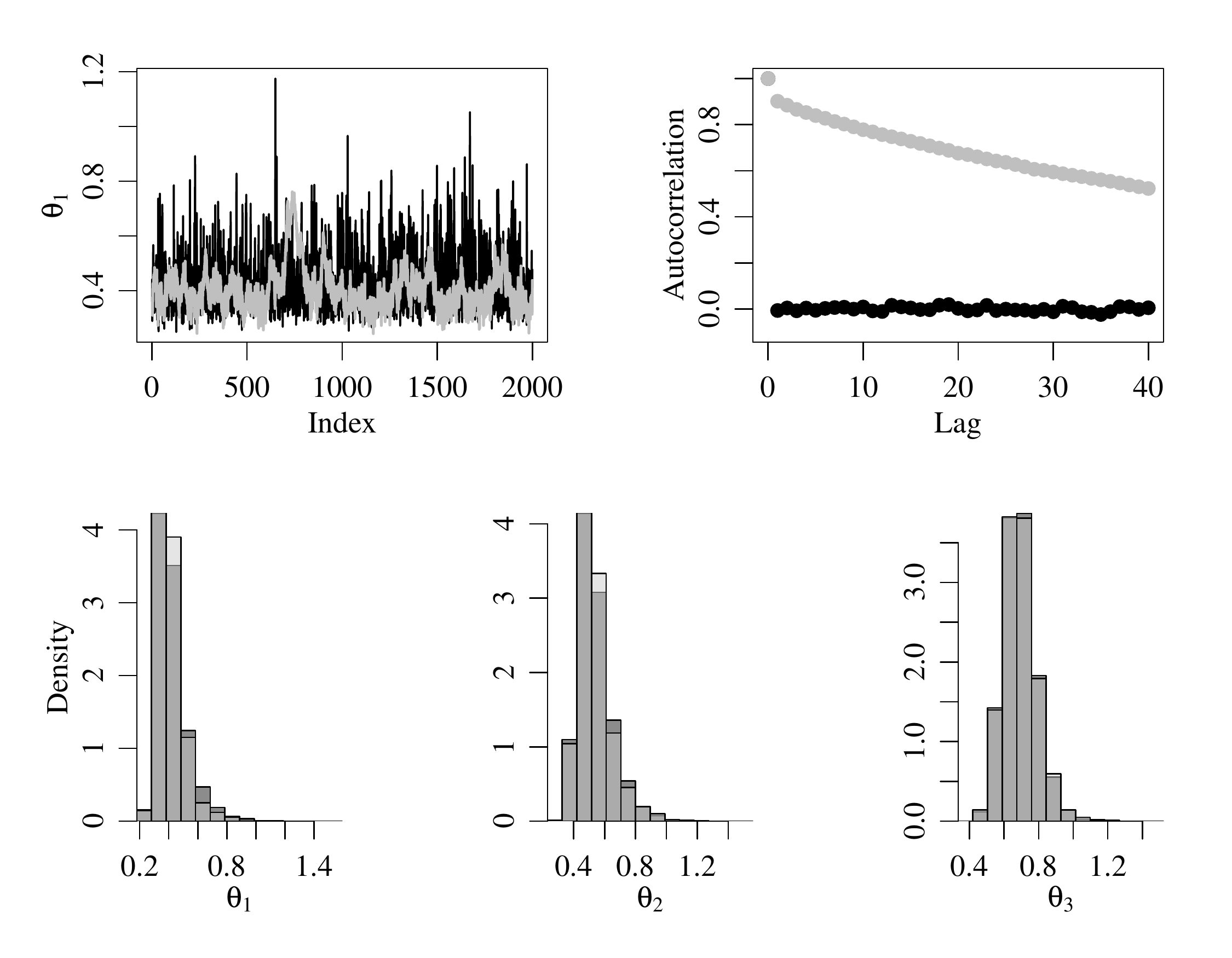}}
\caption{The plot in the top left overlays trace plots of the first principal angle calculated from a portion of each of the chains, while the plot in the top right shows the correlation between lagged values of the first principal angle. The black lines and dots correspond to our MCMC approach, while the gray lines and dots correspond to that of \citet{Hoff2009}. The plots in the bottom half compare histogram approximations of the marginal posterior distributions of the principal angles. }
\label{fig:Binghamsim}
\end{figure}
Suppose the rows of an $n \times p$ data matrix $\bm{Y}$ are independent samples from a mean zero multivariate normal population with covariance matrix $\bm{\Sigma}.$ The spiked covariance model, considered by \citet{Johnstone2001} and others, assumes the covariance matrix $\bm{\Sigma}$ can be decomposed as $\bm{\Sigma} = \sigma^2\left(\bm{Q}\bm{\Lambda}\bm{Q}^T + \bm{I}_p\right)$ with $\bm{Q}\in \mathcal{V}(k,p)$ and $\bm{\Lambda} = \text{diag}(\lambda_1, ..., \lambda_k)$ where $\lambda_1 > ... > \lambda_k > 0.$ Under this model, the covariance matrix is partitioned into the low rank ``signal" component $\sigma^2\bm{Q}\bm{\Lambda}\bm{Q}^T$ and the isotropic ``noise" component $\sigma^2\bm{I}_p.$

Given priors for the unknown parameters, a conventional Bayesian analysis will approximate the posterior distribution by a Markov chain having the posterior as its stationary distribution. Inference for the trivially constrained parameters $\sigma^2$ and $\bm{\Lambda}$ is easily handled by standard MCMC approaches, so we treat these parameters as fixed and focus on inference for the orthogonal matrix parameter $\bm{Q}.$ With a uniform prior for $\bm{Q},$ the posterior distribution is matrix Bingham \citep{Hoff2009}, having density 
\begin{align*}
p(\bm{Q} \mid \bm{Y}, \sigma^2, \bm{\Lambda}) &\propto \etr\left\{\left[\left(\bm{\Lambda}^{-1} + \bm{I}_k\right)^{-1}/\left(2\sigma^2\right)\right]\bm{Q}^T\left[\bm{Y}^T\bm{Y}\right]\bm{Q}\right\}.
\end{align*} 

We compare two MCMC approaches to simulating from the matrix Bingham distribution: the Gibbs sampling method of \citet{Hoff2009} and the Cayley transform approach (again, with HMC as implemented in Stan). The dimensions are chosen as $n=100,\, p=50,$ and $k=3.$ We set $\sigma^2 = 1,\,\bm{\Lambda} = \text{diag}(5, 3, 1.5),$ and choose a true value of $\bm{Q}$ uniformly from $\mathcal{V}(3,50).$ We then generate a data matrix according to the model $\vect{\bm{Y}} \sim \mathcal{N}\left[\bm{0},\sigma^2\left(\bm{Q}\bm{\Lambda}\bm{Q}^T + \bm{I}_p\right) \otimes \bm{I}_{100}\right].$ We run each Markov chain for 12,000 steps and discard the first 2000 steps as burn-in. In order to summarize the high-dimensional posterior simulations in terms of lower dimensional quantities, we compute the principal angles between the columns of the simulated $\bm{Q}$ matrices and the corresponding columns of the posterior mode $\bm{V},$ computed from the eigendecomposition $\bm{A} = \bm{V}\bm{D}\bm{V}^T.$  For $j=1 ,...,3,$ the principal angles are
\begin{align*}
    \theta_{j} = \cos^{-1}\left(\frac{|\bm{q}_{j}^T\bm{v}_j|}{\|\bm{q}_{j}\| \|\bm{v}_j\|}\right)
\end{align*} where $\bm{q}_j$ and $\bm{v}_j$ are the $j$th columns of $\bm{Q}$ and $\bm{V},$ respectively.

Figure \ref{fig:Binghamsim} displays the principal angles calculated from the two Markov chains. The plots in the bottom half of the figure compare histogram approximations of the marginal posterior distributions of the principal angles. There is considerable overlap,  suggesting that the two chains have found their way to equivalent modes of their matrix Bingham stationary distribution. The plot in the top left of the figure overlays trace plots of the first principal angle calculated from a portion of each of the chains. The black line corresponds to our MCMC approach, while the gray line corresponds to that of \citet{Hoff2009}. The plot in the top right shows the correlation between lagged values of the first principal angle. Again, the black dots correspond to our MCMC approach, while the gray dots correspond to that of \citet{Hoff2009}. Together, the plots in the top half of the figure indicate that, compared to the approach of \citet{Hoff2009}, the Cayley transform approach produces a Markov chain with less autocorrelation, reducing Monte Carlo error in the resulting posterior inferences. 

\section{An asymptotic independent normal approximation} \label{normal_apprx_section}

In Section \ref{densities}, we derived the density for the distribution of $\bm{\varphi} = C^{-1}(\bm{Q})$ when $\bm{Q}$ is distributed uniformly on the Stiefel manifold. However, the expression involves the rather opaque Jacobian term $J_{d_{\mathcal{V}}}C(\bm{\varphi}).$ Instead of analyzing the density function, we can gain insight into the distribution of $\bm{\varphi}$ by other means. The critical observation, evident in simulations, is the following: If $\bm{Q} \sim P_{\mathcal{V}(k,p)}$ with $p$ large and $p \gg k$ then, in some sense, the elements of $\bm{\varphi} = C^{-1}(\bm{Q})$ are approximately independent and normally distributed. Theorem \ref{normal_apprx_theorem} of this section provides a mathematical explanation for this empirical phenomenon. 

In order to understand Theorem \ref{normal_apprx_theorem} and its broader context, it is helpful to review the literature relating to normal approximations to the entries of random orthogonal matrices or transformations thereof. For the sake of consistency and clarity, the notation and formulation of the relevant results have been modified slightly. Let $\{\bm{Q}_p\}$ be a sequence of random orthogonal matrices with each element $\bm{Q}_p$ uniform on $\mathcal{V}(k_p, p).$ The notation $k_p$ indicates that the number of columns may grow with the number of rows. For each $p,$ let $q_p$ be the top left entry of $\bm{Q}_p$ (any other entry would also work). It has long been observed that $q_p$ is approximately normal when $p$ is large. The earliest work in this direction relates to the equivalence of ensembles in statistical mechanics and is due to Mehler \citep{Mehler1866}, Maxwell \citep{Maxwell1875, Maxwell1878}, and Borel \citep{Borel1906}. A theorem of Borel shows that $\text{Pr}(\sqrt{p} q_p \leq x) \to \Phi(x)$ as $p$ grows, where $\Phi$ is the cumulative distribution function of a standard normal random variable. Since then, a large and growing literature on this sort of normal approximation has emerged. A detailed history is given in \citet{Diaconis1987}, while an overview is given in \citet{DAristotile2003} and \citet{Jiang2006}.

Much of this literature is devoted to normal approximations to the joint distribution of entries of random orthogonal matrices. Letting $\bm{q}_p$ be the first column of $\bm{Q}_p$ for each $p$ (again, any other column would also work), \citet{Stam1982} proved that the total variation distance between the distribution of the first $m_p$ coordinates of $\bm{q}_p$ and the distribution of $m_p$ standard normal random variables converges to zero as $p$ gets large so long as $m_p=o(\sqrt{p}).$ \citet{Diaconis1987} strengthened this result, showing that it holds for $m_p = o(p).$ \citet{Diaconis1992} prove that the total variation distance between the distribution of the top left $m_p \times n_p$ block of $\bm{Q}_p$ and the distribution of $m_p n_p$ independent standard normals goes to zero as $p \to \infty$ if $m_p = o(p^{\gamma})$ and $n_p = o(p^{\gamma})$ for $\gamma = 1/3.$ (Clearly, we must have $n_p \leq k_p$ for this result to make sense.) Their work drew attention to the problem of determining the largest orders of $m_p$ and $n_p$ such that the total variation distance goes to zero. \citet{Jiang2006} solved this problem, finding the largest orders to be $o(p^{1/2}).$ Recent work has further explored this topic \citep{Stewart2018, Jiang2017}.

Many authors have also considered transformations of random orthogonal matrices or notions of approximation not based on total variation distance. In the former category, \citet{DAristotile2003} and \citet{Meckes2008} study the convergence of linear combinations of the entries of the matrices in the sequence $\{\bm{Q}_p\}$ to normality as $p \to \infty.$ \citet{Diaconis1994, Stein1995, Johansson1997}, and \citet{Rains1997} addresses the normality of traces of powers of random orthogonal and unitary matrices. In the latter category, \citet{Chatterjee2008} and \citet{Jiang2017} consider probability metrics other than total variation distance. \citet{Jiang2006} also considers a notion of approximation other than total variation distance, and Theorem 3 in that work is particularly important in understanding our Theorem \ref{normal_apprx_theorem}.

Theorem 3 of \citet{Jiang2006} tells us that the distribution $P_{\mathcal{V}(k_p,p)}$ can be approximated by the distribution of $pk_p$ independent normals provided that $p$ is sufficiently large (both in an absolute sense and relative to $k_p$). The form of approximation in the theorem is weaker and likely less familiar than one based on total variation distance. Define the max norm $\|\cdot\|_{\text{max}}$ of a matrix as the maximum of the absolute values of its entries. \citet{Jiang2006} shows that one can construct a sequence of pairs of random matrices $\{\bm{Z}_p, \bm{Q}_p\}$ with each pair defined on the same probability space such that 
\begin{enumerate}[(i)]
    \item The entries of the $p \times k_p$ matrix $\bm{Z}_p$ are independent standard normals;
    \item The matrix $\bm{Q}_p$ is uniform on $\mathcal{V}(k_p, p);$
    \item The quantity $\epsilon_p = \|\sqrt{p}\bm{Q}_p - \bm{Z}_p\|_{\text{max}} \to 0$ in probability as $p$ grows provided that $k_p = o(p /\log p),$ and this is the largest order of $k_p$ such that the result holds.
\end{enumerate} The coupling is constructed by letting $\bm{Q}_p$ be the result of the Gram-Schmidt orthogonalization procedure applied to $\bm{Z}_p.$ 

To better understand this type of approximation, which we refer to as `approximation in probability,' consider the problem of simulating from the distribution of the random matrix $\sqrt{p}\bm{Q}_p$ on a computer with finite precision. One could simulate a matrix $\bm{Z}_p$ of independent standard normals, obtain $\bm{Q}_p$ using Gram-Schmidt, then multiply by $\sqrt{p}$ to arrive at $\sqrt{p}\bm{Q}_p.$ However, for a fixed machine precision and $p$ sufficiently large (again, both in an absolute sense and relative to $k_p$), the matrix $\sqrt{p}\bm{Q}_p$ would be indistinguishable from $\bm{Z}_p$ with high probability.

Our Theorem \ref{normal_apprx_theorem} establishes that the distribution of $\bm{\varphi}_p = C_p^{-1}(\bm{Q}_p),$ which we know to have a density proportional to $J_{d_{\mathcal{V}}}C_p(\bm{\varphi}),$ can be approximated in probability by independent normals. (Since we now have a sequence of matrices of different dimensions, we denote the Cayley transform parametrizing $\mathcal{V}(k_p, p)$ by $C_p.$) For each $p,$ define the diagonal scale matrix 
\begin{align*}\bm{\Pi}_p &= 
\begin{bmatrix} \sqrt{p/2}\bm{I}_{k_p(k_p-1)/2} & \bm{0} \\ 
                \bm{0} & \sqrt{p}\bm{I}_{(p-k_p)k_p}
\end{bmatrix},
\end{align*} and recall that the infinity norm $\|\cdot\|_{\infty}$ of a vector is equal to the maximum of the absolute values of its entries.
\begin{theorem}\label{normal_apprx_theorem}
One can construct a sequence of pairs of random vectors $\{\bm{z}_p, \bm{\varphi}_p\}$ such that 
\begin{enumerate}[(i)]
\item The entries of the vector $\bm{z}_p$ are independent standard normals; 
\item The vector $\bm{\varphi}_p \stackrel{\text{dist.}}{=} C_p^{-1}(\bm{Q}_p)$ where $\bm{Q}_p \sim P_{\mathcal{V}(k_p,p)};$
\item The quantity $\epsilon_p := \|\bm{\Pi}_p\bm{\varphi}_p - \bm{z}_p\|_{\infty} \to 0$ in probability as $p \to \infty$ provided $k_p  = o\left(\frac{p^{1/4}}{\sqrt{\log{p}}}\right).$
\end{enumerate}
\end{theorem}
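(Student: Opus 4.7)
The plan is to piggyback on the Gram--Schmidt coupling of \citet{Jiang2006}. On a common probability space, draw a $p \times k_p$ matrix $\bm{Z}_p$ with independent $N(0,1)$ entries, let $\bm{Q}_p$ be the orthogonal factor in its $QR$ decomposition, and set $\bm{\varphi}_p := C_p^{-1}(\bm{Q}_p)$. Then $\bm{Q}_p \sim P_{\mathcal{V}(k_p,p)}$ and $\epsilon_p^J := \|\sqrt{p}\bm{Q}_p - \bm{Z}_p\|_{\text{max}} \to 0$ in probability, since the hypothesis $k_p = o(p^{1/4}/\sqrt{\log p})$ is far stronger than the $o(p/\log p)$ required by Jiang's result. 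Partition $\bm{Z}_p = [\bm{Z}_{p,1}^T\ \bm{Z}_{p,2}^T]^T$ conformally with $\bm{Q}_p = [\bm{Q}_1^T\ \bm{Q}_2^T]^T$ and build $\bm{z}_p$ by taking its $\vect{\bm{A}}$-components to be $\vect{\bm{Z}_{p,2}}$ and its $\bm{b}$-components to be $(\bm{Z}_{p,1,ij} - \bm{Z}_{p,1,ji})/\sqrt{2}$ for $i>j$. Because $\bm{Z}_{p,1}$ and $\bm{Z}_{p,2}$ are independent and $(Z_{ij},Z_{ji})\mapsto((Z_{ij}+Z_{ji})/\sqrt{2},(Z_{ij}-Z_{ji})/\sqrt{2})$ is an orthogonal change of variables, the entries of $\bm{z}_p$ are iid standard normal, which verifies (i); (ii) holds by construction.

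The second step is a Neumann expansion of the inverse Cayley map. The identity $\bm{I}_k + \bm{F} = 2(\bm{I}_k + \bm{Q}_1)^{-1}$ (immediate from the definition of $\bm{F}$) simplifies (\ref{inverse_eqn_F})--(\ref{inverse_eqn_A}) to $\bm{A} = \bm{Q}_2(\bm{I}_k + \bm{Q}_1)^{-1}$ and $\bm{B} = \bm{Q}_1(\bm{I}_k + \bm{Q}_1)^{-1} - (\bm{I}_k + \bm{Q}_1^T)^{-1}\bm{Q}_1^T$. On the high-probability event $\{\|\bm{Q}_1\|_{\mathrm{op}} < 1/2\}$, expanding the resolvents as Neumann series yields $\bm{A} - \bm{Q}_2 = -\bm{Q}_2\bm{Q}_1(\bm{I}_k + \bm{Q}_1)^{-1}$ and $\bm{B} - (\bm{Q}_1 - \bm{Q}_1^T) = \sum_{m\ge 2}(-1)^{m-1}\bigl(\bm{Q}_1^m - (\bm{Q}_1^T)^m\bigr)$. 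Subtracting $\bm{z}_p$ then splits the $\vect{\bm{A}}$-block of $\bm{\Pi}_p\bm{\varphi}_p - \bm{z}_p$ into the first-order coupling residual $\sqrt{p}\bm{Q}_2 - \bm{Z}_{p,2}$ plus the second-order remainder $-\sqrt{p}\bm{Q}_2\bm{Q}_1(\bm{I}_k + \bm{Q}_1)^{-1}$, and analogously splits each $\bm{b}$-entry into a first-order coupling residual and a $\sqrt{p/2}$-scaled series in higher powers of $\bm{Q}_1$.

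Controlling these error terms in $\|\cdot\|_\infty$ will rest on three probabilistic inputs. First, Jiang's $\epsilon_p^J \to 0$ directly handles the first-order residuals in both blocks. Second, classical Gaussian matrix concentration gives $\|\bm{Z}_{p,1}\|_{\mathrm{op}} = O_p(\sqrt{k_p})$, and transferring through the coupling yields $\|\bm{Q}_1\|_{\mathrm{op}} = O_p(\sqrt{k_p/p}) \to 0$; this both validates the Neumann expansion and forces geometric decay of the series tails in $\bm{B}$. Third, Hanson--Wright/sub-exponential maximal inequalities bound the max-norm of Gaussian products such as $\bm{Z}_{p,2}\bm{Z}_{p,1}$ and $\bm{Z}_{p,1}^2 - (\bm{Z}_{p,1}^T)^2$, into which the dominant second-order remainders reduce once one substitutes $\sqrt{p}\bm{Q}_p \approx \bm{Z}_p$ for each occurrence of $\bm{Q}_j$ and absorbs the resulting coupling remainders via (a) and (b).

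The main obstacle is doing this max-norm bookkeeping tightly enough that $k_p$ may grow at the stated rate. The binding constraint comes from the second-order terms $\sqrt{p}\bm{Q}_2\bm{Q}_1$ in the $\vect{\bm{A}}$-block and $\sqrt{p/2}\bigl(\bm{Q}_1^2 - (\bm{Q}_1^T)^2\bigr)$ in the $\bm{b}$-block; after passing to their Gaussian counterparts via the coupling, these become max-norms of quadratic forms in iid standard normals taken over roughly $pk_p$ and $k_p^2$ entries respectively, and the trade-off between variance growth in $k_p$ and Gaussian maxima in $\log p$ yields the condition $k_p = o(p^{1/4}/\sqrt{\log p})$. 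Once those dominant second-order terms are controlled, the higher-order terms in the Neumann series are of strictly smaller order on the good event, and $\epsilon_p := \|\bm{\Pi}_p\bm{\varphi}_p - \bm{z}_p\|_\infty \to 0$ in probability follows.
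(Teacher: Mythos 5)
Your coupling construction and the verification of parts (i)--(ii) are exactly what the paper does (your explicit description of the entries of $\bm{z}_p$ is the same as the paper's $\bm{z}_p = \bm{\Pi}_p\widetilde{C}_p^{-1}(p^{-1/2}\bm{Z}_p)$), and the overall architecture for (iii) also matches: the paper introduces a first-order approximation $\widetilde{C}_p^{-1}$, sets $\widetilde{\bm{\varphi}}_p = \widetilde{C}_p^{-1}(\bm{Q}_p)$, and splits $\|\bm{\Pi}_p\bm{\varphi}_p - \bm{z}_p\|_\infty$ into the first-order coupling residual $\|\bm{\Pi}_p\widetilde{\bm{\varphi}}_p - \bm{z}_p\|_\infty$ (handled directly by Jiang's theorem, at rate $o(p/\log p)$) and the higher-order remainder $\|\bm{\Pi}_p\bm{\varphi}_p - \bm{\Pi}_p\widetilde{\bm{\varphi}}_p\|_\infty$ (which forces the sharper $k_p$ condition). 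Your Neumann expansion of the inverse Cayley map is the same geometric series the paper uses.

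Where you diverge, and where the gaps lie, is in how the higher-order remainder is controlled. The paper stays entirely elementary: it uses the max-norm submultiplicativity $\|\bm{W}_1\bm{W}_2\|_{\max} \le d_2\|\bm{W}_1\|_{\max}\|\bm{W}_2\|_{\max}$ to get $\|\bm{Q}_{p,1}^i\|_{\max}\le k_p^{i-1}\|\bm{Q}_{p,1}\|_{\max}^i$, sums the geometric series to obtain the bound $\sqrt{2}\,k_p^2\sqrt{p}\,\|\bm{Q}_{p,1}\|_{\max}^2/(1-k_p\|\bm{Q}_{p,1}\|_{\max})$ on the $\bm{B}$-block error (and similarly for the $\bm{A}$-block), and then a short lemma shows $k_p\,p^{1/4}\|\bm{Q}_p\|_{\max}\to 0$ in probability under $k_p = o(p^{1/4}/\sqrt{\log p})$, using Jiang's coupling together with $\mathbbm{E}\|\bm{Z}_p\|_{\max}\le\sqrt{2\log(pk_p)}$. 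The factor $k_p^2$ produced by the crude product bound is precisely what generates $p^{1/4}$. Your route instead proposes operator-norm control $\|\bm{Q}_1\|_{\mathrm{op}} = O_p(\sqrt{k_p/p})$ plus Hanson--Wright on the Gaussian surrogates $\bm{Z}_{p,2}\bm{Z}_{p,1}$ and $\bm{Z}_{p,1}^2 - (\bm{Z}_{p,1}^T)^2$. That is a plausible and potentially sharper strategy, but as written there are three unaddressed issues. First, the transfer $\|\bm{Q}_1\|_{\mathrm{op}} = O_p(\sqrt{k_p/p})$ does not follow from the max-norm coupling alone: Jiang gives $\|\bm{Q}_1 - p^{-1/2}\bm{Z}_1\|_{\max}\to 0$ without a rate, and converting max norm to operator norm costs a factor $k_p$. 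Second, substituting $\sqrt{p}\bm{Q}_p\approx\bm{Z}_p$ inside the quadratic terms produces cross-terms like $p^{-1/2}\bm{Z}_2(\sqrt{p}\bm{Q}_1-\bm{Z}_1)$ that must be bounded in max norm; Jiang's theorem again provides no rate for $\epsilon_p^J$, so this requires a separate argument (it does go to zero under the stated hypothesis, but you need to show this, not assume it). Third, and most importantly, your final sentence merely asserts that the Hanson--Wright / Gaussian-maximum trade-off "yields the condition $k_p = o(p^{1/4}/\sqrt{\log p})$," but if you actually carry out that computation you would find that the concentration route gives a \emph{weaker} restriction on $k_p$ (roughly $k_p = o(p/\log^2 p)$); the $p^{1/4}$ in the paper comes specifically from the lossy $k_p^{i-1}$ factor in the max-norm product bound, not from a variance/maximum trade-off. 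So the step where you claim to derive the stated rate is not demonstrated, and the heuristic you give for it does not actually produce that rate.
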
 

The construction of the coupling is more elaborate than in Theorem 3 of \citet{Jiang2006}. We first introduce a function $\widetilde{C}_p^{-1}$ which approximates the inverse Cayley transform. Given a matrix $\bm{M} \in \mathbbm{R}^{p \times k_p}$ having a square top block $\bm{M}_{1}$ and a bottom block $\bm{M}_{2}$, the vector $\widetilde{b}_p(\bm{M})$ contains the independent entries of ${\widetilde{B}_p(\bm{M}) = \bm{M}_{1} - \bm{M}_{1}^T}$ obtained by eliminating diagonal and supradiagonal entries from $\vect{\widetilde{B}_p(\bm{M})}$ while $\widetilde{A}_p(\bm{M}) = \bm{M}_{2}.$ The approximate inverse Cayley transform is then 
\begin{align*}
    \widetilde{C}_p^{-1}(\bm{M}) &=
    \begin{bmatrix}
        \widetilde{b}_p(\bm{M}) \\ 
         \vect{\widetilde{A}_p(\bm{M})}
    \end{bmatrix}.
\end{align*} Now let $\bm{Z}_{p}$ be a $p \times k_p$ matrix of independent standard normals and let $\bm{Q}_{p}$ be the result of applying the Gram-Schmidt orthogonalization procedure to $\bm{Z}_{p}.$ It follows that $\bm{Q}_{p} \sim P_{\mathcal{V}(k_p, p)}.$ Finally, set   
\begin{align*}
\bm{z}_p  &= \bm{\Pi}_p \,  \widetilde{C}_p^{-1}(p^{-1/2}\bm{Z}_{p})\\ 
\bm{\varphi}_p &= C_p^{-1}(\bm{Q}_{p}). 
\end{align*} 

The details of the proof of Theorem \ref{normal_apprx_theorem} appear in the appendix, but we provide a sketch here. Part (i) is straightforward to verify and (ii) is immediate. Part (iii) requires more work. The proof of (iii) involves verifying the following proposition, which involves a third random vector $\widetilde{\bm{\varphi}}_p = \widetilde{C}_p^{-1}(\bm{Q}_p):$

\begin{proposition} \label{normal_apprx_prop} (i) The quantity $\|\bm{\Pi}_p\widetilde{\bm{\varphi}}_p - \bm{\Pi}_p\bm{\varphi}_p\|_{\infty} \to 0$ in probability as $p \to \infty$ provided $k_p  = o\left(\frac{p^{1/4}}{\sqrt{\log{p}}}\right),$ and (ii) the quantity $\|\bm{\Pi}_p\widetilde{\bm{\varphi}}_p - \bm{z}_p\|_{\infty} \to 0$ in probability as $p \to \infty$ provided $k_p  = o\left(\frac{p}{\log{p}}\right).$ 
\end{proposition}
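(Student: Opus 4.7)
The proposition splits the total approximation error into two pieces, and I would dispose of the easier part (ii) first. Because the approximate inverse Cayley map $\widetilde{C}_p^{-1}$ is \emph{linear} in its matrix argument---its top-block component takes the skew-symmetric combination $\bm{M}_1 - \bm{M}_1^T$, while its bottom-block component is the identity on the entries of $\bm{M}_2$---we have
$$\bm{\Pi}_p \widetilde{\bm{\varphi}}_p - \bm{z}_p \;=\; \bm{\Pi}_p \,\widetilde{C}_p^{-1}\bigl(\bm{Q}_p - p^{-1/2}\bm{Z}_p\bigr).$$
Every coordinate of the right-hand side is then a $\sqrt{p/2}$-scaled antisymmetric difference or a $\sqrt{p}$-scaled entry of $\bm{Q}_p - p^{-1/2}\bm{Z}_p$, so by the triangle inequality $\|\bm{\Pi}_p \widetilde{\bm{\varphi}}_p - \bm{z}_p\|_{\infty} \le \sqrt{2}\,\|\sqrt{p}\,\bm{Q}_p - \bm{Z}_p\|_{\max}$. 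Theorem 3 of \citet{Jiang2006}, invoked for the very Gram--Schmidt coupling by which $\bm{Q}_p$ was constructed from $\bm{Z}_p$, makes the right-hand side vanish in probability under the stated condition $k_p = o(p/\log p)$, finishing (ii).

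Part (i) is a deterministic comparison of two nonlinear maps applied to the \emph{same} orthogonal matrix, controlled by high-probability bounds on $\bm{Q}_p$. Starting from the inversion formulas \eqref{inverse_eqn_F}--\eqref{inverse_eqn_A}, the algebraic identity $\bm{I}_k + \bm{F} = 2(\bm{I}_k + \bm{Q}_1)^{-1}$ immediately yields
$$\bm{A} - \widetilde{A} \;=\; -\,\bm{Q}_2 \bm{Q}_1 (\bm{I}_k + \bm{Q}_1)^{-1},$$
and a parallel rearrangement (equivalently, the first step of the Neumann expansion of $(\bm{I}_k + \bm{Q}_1)^{-1}$) gives
$$\bm{B} - \widetilde{B} \;=\; (\bm{I}_k + \bm{Q}_1^T)^{-1}(\bm{Q}_1^T)^2 \;-\; \bm{Q}_1^2 (\bm{I}_k + \bm{Q}_1)^{-1}.$$
Both residuals are thus quadratic in $\bm{Q}_1$ (or in the product $\bm{Q}_2\bm{Q}_1$), modulated by the resolvent $(\bm{I}_k + \bm{Q}_1)^{-1}$, which is well-defined and has operator norm at most $2$ on the event $\|\bm{Q}_1\|_{\mathrm{op}} \le 1/2$.

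The remaining work is to push these residuals through the $\sqrt{p}$-amplification of $\bm{\Pi}_p$. I would rely on three probabilistic ingredients, all available from the Gram--Schmidt coupling together with standard random-matrix concentration: a spectral estimate $\|\bm{Q}_1\|_{\mathrm{op}} \lesssim \sqrt{k_p/p}$ in probability, which justifies the Neumann expansion and controls the resolvent; an entrywise maximum $\|\bm{Q}_p\|_{\max} \lesssim \sqrt{(\log p)/p}$ (a Gumbel-type bound for the nearly Gaussian entries); and row-norm control on $\bm{Q}_2$ obtained from the same source. Applying Cauchy--Schwarz to an entry of $\bm{Q}_2 \bm{Q}_1 (\bm{I}_k + \bm{Q}_1)^{-1}$ on the outer and inner summations converts these into a max-norm bound for the residual, which after multiplication by $\sqrt{p}$ vanishes under $k_p = o(p^{1/4}/\sqrt{\log p})$. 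The main obstacle lies precisely here: naive bounds such as $\|\cdot\|_{\max} \le \|\cdot\|_{\mathrm{op}}$ or $\|AB\|_{\max} \le \|A\|_F\|B\|_F$ throw away too much of the cancellation inside a product of two dependent Stiefel sub-blocks, so reaching the stated exponent requires the entrywise argument to track both the spectral smallness of $\bm{Q}_1$ and the CLT-type cancellation among the $k_p$ summands in each entry of the product.
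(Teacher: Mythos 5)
Your treatment of part (ii) agrees with the paper: linearity of $\widetilde{C}_p^{-1}$ reduces the infinity norm to a constant multiple of $\|\sqrt{p}\bm{Q}_p - \bm{Z}_p\|_{\max}$, and Theorem~3 of \citet{Jiang2006} finishes. For part (i), your closed-form residual identities are correct, and they are an equivalent packaging of what the paper writes as Neumann series, $\widetilde{\bm{B}}_p(\bm{Q}_p) - \bm{B}_p = \sum_{i\geq 2}(-1)^i(\bm{Q}_{p,1}^i - \bm{Q}_{p,1}^{iT})$ and $\widetilde{\bm{A}}_p(\bm{Q}_p) - \bm{A}_p = \sum_{i\geq 1}(-1)^{i-1}\bm{Q}_{p,2}\bm{Q}_{p,1}^i$. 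But the remainder of your argument stays at the level of a sketch, and the closing worry that ``naive bounds throw away too much cancellation'' is misdirected: the paper reaches $k_p = o(p^{1/4}/\sqrt{\log p})$ using nothing beyond the entrywise submultiplicative bound $\|\bm{W}_1\bm{W}_2\|_{\max} \leq d_2\|\bm{W}_1\|_{\max}\|\bm{W}_2\|_{\max}$ together with $\|\bm{W}\|_F\leq\sqrt{d_1d_2}\|\bm{W}\|_{\max}$.

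Concretely, applying that bound termwise to the series (valid on the event $k_p\|\bm{Q}_{p,1}\|_{\max}<1$, which forces $\|\bm{Q}_{p,1}\|_F<1$) yields
$$\sqrt{p/2}\,\left\|\widetilde{\bm{B}}_p - \bm{B}_p\right\|_{\max} \leq \frac{\sqrt{2}\,k_p^2\sqrt{p}\,\|\bm{Q}_{p,1}\|_{\max}^2}{1-k_p\|\bm{Q}_{p,1}\|_{\max}}, \qquad \sqrt{p}\,\left\|\widetilde{\bm{A}}_p - \bm{A}_p\right\|_{\max} \leq \frac{k_p\sqrt{p}\,\|\bm{Q}_p\|_{\max}^2}{1-k_p\|\bm{Q}_{p,1}\|_{\max}},$$
after which the only probabilistic input is that $k_p^2\sqrt{p}\,\|\bm{Q}_p\|_{\max}^2\to 0$ and $k_p\|\bm{Q}_{p,1}\|_{\max}\to 0$ in probability under $k_p=o(p^{1/4}/\sqrt{\log p})$. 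Both follow from $\|\bm{Q}_p\|_{\max}\leq p^{-1/2}\left(\|\sqrt{p}\bm{Q}_p-\bm{Z}_p\|_{\max}+\|\bm{Z}_p\|_{\max}\right)$, Jiang's coupling for the first summand, and the $\sqrt{2\log(pk_p)}$ max-of-Gaussians bound for the second. None of the additional machinery you propose---a spectral estimate $\|\bm{Q}_1\|_{\mathrm{op}}\lesssim\sqrt{k_p/p}$, row-norm control on $\bm{Q}_2$, or Cauchy--Schwarz to capture CLT-type cancellation---is needed; the resolvent $(\bm{I}_k+\bm{Q}_1)^{-1}$ is simply dissolved by summing the geometric series. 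You also omit a genuine step: the bounds above hold only on the event $k_p\|\bm{Q}_{p,1}\|_{\max}<1$, so the proof must separately show that this event has probability tending to one and then pass from the conditional probability to the unconditional one, which the paper does explicitly before invoking the continuous mapping theorem.
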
 \noindent Part (iii) then follows from the proposition by the triangle inequality.

\section*{Acknowledgements}
This work was partially supported by grants from the National Science Foundation (DMS-1505136, IIS-1546130) and the United States Office of Naval Research (N00014-14-1-0245/N00014-16-1-2147).
\bibliographystyle{plainnat}
\bibliography{references}

\appendix

\section{Proofs}

\subsection{The sum of a symmetric positive definite matrix and skew-symmetric matrix is nonsingular} \label{sum_pd_skew} 

Let $\bm{\Sigma}$ and $\bm{S}$ be symmetric positive definite and skew-symmetric, respectively, of equal dimension.  Their sum can be written 
\begin{align*}
    \bm{\Sigma} + \bm{S} &= \bm{\Sigma}^{1/2} \left( I + \bm{\Sigma}^{-1/2} \bm{S} \bm{\Sigma}^{-1/2}\right) \bm{\Sigma}^{1/2}.
\end{align*} The matrix $\bm{\Sigma}^{-1/2} \bm{S} \bm{\Sigma}^{-1/2}$ is skew-symmetric, which implies that $\bm{I} + \bm{\Sigma}^{-1/2} \bm{S} \bm{\Sigma}^{-1/2}$ is nonsingular. Because it can be written as the product of nonsingular matrices, the sum $\bm{\Sigma} + \bm{S}$ is nonsingular. 

\subsection{Proof of Proposition \ref{jacobian_stiefel}}

One can compute, following \citet{Magnus1988}, that 
\begin{align*}
    d\bm{Q} = 2(\bm{I}_p - \bm{X}_{\bm{\varphi}})^{-1} d\bm{X}_{\bm{\varphi}} (\bm{I}_p - \bm{X}_{\bm{\varphi}})^{-1} \bm{I}_{p\times k},
\end{align*}
so that 
\begin{align*}
    d\vect{\bm{Q}} = 2\left[\bm{I}_{p\times k}^T(\bm{I}_p - \bm{X}_{\bm{\varphi}})^{-T} \otimes (\bm{I}_p - \bm{X}_{\bm{\varphi}}^{-1})\right] d\vect{\bm{X}_{\bm{\varphi}}}.
\end{align*} By Lemma \ref{lintran_stiefel}, we have that ${\vect{X}_{\bm{\varphi}} = \bm{\Gamma}_{\mathcal{V}}\bm{\varphi}}.$ Thus,
\begin{align*}
    d\vect{\bm{Q}} = 2\left[\bm{I}_{p\times k}^T(\bm{I}_p - \bm{X}_{\bm{\varphi}})^{-T} \otimes (\bm{I}_p - \bm{X}_{\bm{\varphi}})^{-1}\right] \bm{\Gamma}_{\mathcal{V}}\, d\bm{\varphi}.
\end{align*} Using the first identification table of \citet{Magnus1988}, we identify the derivative matrix 
\begin{align*}
    D\text{Cay}_{\mathcal{V}}(\bm{\varphi}) = 2\left[\bm{I}_{p\times k}^T(\bm{I}_p - \bm{X}_{\bm{\varphi}})^{-T} \otimes (\bm{I}_p - \bm{X}_{\bm{\varphi}})^{-1}\right] \bm{\Gamma}_{\mathcal{V}}.
\end{align*} 

\subsection{Proof of Proposition \ref{grassmann_bijection}}
We first show that $l$ is injective on $\mathcal{V}^{+}(k,p).$ Let $\bm{Q} = \left[\bm{Q}_1^T \, \bm{Q}_2^T\right]^T, \, \bm{Q}^{'} = \left[\bm{Q}_1^{'T} \, \bm{Q}_2^{'T}\right]^T \in \mathcal{V}^{+}(k,p)$ and suppose that $l(\bm{Q}) = l\left(\bm{Q}^{'}\right),$ i.e. the columns of $\bm{Q}$ and $\bm{Q}^{'}$ span the same subspace. There must exist $\bm{R}\in \mathcal{O}(k)$ such that $\bm{Q} = \bm{Q}^{'}\bm{R}$ so that $\bm{Q}_1 = \bm{Q}_1^{'} \bm{R}.$ Because the matrix $\bm{Q}_1$ is nonsingular, its left polar decomposition into the product of a symmetric positive definite matrix and an orthogonal matrix is unique (see, for example, Proposition 5.5 of \citet{Eaton1983}). We conclude that $\bm{R} = \bm{I}_k$ and $\bm{Q}_1 = \bm{Q}_1^{'}.$ Thus, $l$ is injective on $\mathcal{V}^{+}(k,p).$

Next, we prove that the image of $\mathcal{V}^{+}(k,p)$ under $l$ has measure one with respect to the uniform probability measure $P_{\mathcal{G}(k,p)}$ on $\mathcal{G}(k,p).$ Define $\mathcal{V}^{\text{N}}(k,p)$ as the set
\begin{align*}
    \mathcal{V}^{\text{N}}(k,p) &= \left\{\bm{Q}=\left[\bm{Q}_1^T \quad \bm{Q}_2^T\right]^T \in \mathcal{V}(k,p) : \bm{Q}_1 \text{ is nonsingular}\right\}.
\end{align*} The set $\mathcal{V}^{\text{N}}(k,p)$ has measure one with respect to $P_{\mathcal{V}(k,p)}$ and the following lemma holds:  
\begin{lemma}
The images of $\mathcal{V}^{+}(k,p)$ and $\mathcal{V}^{\text{N}}(k,p)$ under $l$ are equal. 
\end{lemma}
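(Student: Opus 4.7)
The plan is to prove the two set-inclusions separately. The inclusion $l(\mathcal{V}^{+}(k,p)) \subseteq l(\mathcal{V}^{\text{N}}(k,p))$ is immediate since any symmetric positive definite matrix is in particular nonsingular, so $\mathcal{V}^{+}(k,p) \subseteq \mathcal{V}^{\text{N}}(k,p)$ and hence the inclusion passes through $l$.

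For the reverse inclusion $l(\mathcal{V}^{\text{N}}(k,p)) \subseteq l(\mathcal{V}^{+}(k,p))$, I would take an arbitrary $\bm{Q} = [\bm{Q}_1^T \,\, \bm{Q}_2^T]^T \in \mathcal{V}^{\text{N}}(k,p)$ and exhibit a companion $\bm{Q}' \in \mathcal{V}^{+}(k,p)$ with the same column space, i.e.\ $l(\bm{Q}') = l(\bm{Q})$. The natural tool is the left polar decomposition of the nonsingular square block $\bm{Q}_1$: write $\bm{Q}_1 = \bm{P}\bm{R}$, where $\bm{P}$ is symmetric positive definite and $\bm{R} \in \mathcal{O}(k)$, as in Proposition 5.5 of Eaton (1983). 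Then I would set $\bm{Q}' = \bm{Q}\bm{R}^T$. A one-line check shows $\bm{Q}' \in \mathcal{V}(k,p)$ because $(\bm{Q}\bm{R}^T)^T(\bm{Q}\bm{R}^T) = \bm{R}\bm{Q}^T\bm{Q}\bm{R}^T = \bm{I}_k$, and the top block of $\bm{Q}'$ is $\bm{Q}_1\bm{R}^T = \bm{P}\bm{R}\bm{R}^T = \bm{P} \succ \bm{0}$, so $\bm{Q}' \in \mathcal{V}^{+}(k,p)$.

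Finally, since right multiplication of an orthogonal basis matrix by an element of $\mathcal{O}(k)$ preserves the column space, $l(\bm{Q}') = l(\bm{Q}\bm{R}^T) = l(\bm{Q})$, which establishes the reverse inclusion and hence equality of the two images.

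I do not expect any real obstacle here — the work reduces to invoking the polar decomposition and recalling that $l$ is invariant under right multiplication by orthogonal matrices. The only subtle point worth emphasizing in the write-up is that nonsingularity of $\bm{Q}_1$ is exactly what allows the factor $\bm{P}$ in the polar decomposition to be strictly positive definite (rather than merely positive semidefinite), which is precisely the property required for membership in $\mathcal{V}^{+}(k,p)$.
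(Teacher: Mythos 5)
Your proof is correct and is essentially the same argument the paper gives: the paper constructs the required $\bm{Q}'$ from the SVD $\bm{Q}_1 = \bm{U}\bm{D}\bm{V}^T$ by setting $\bm{Q}' = \bm{Q}\bm{V}\bm{U}^T$, which is exactly right-multiplication by the transpose of the orthogonal factor in the left polar decomposition $\bm{Q}_1 = (\bm{U}\bm{D}\bm{U}^T)(\bm{U}\bm{V}^T)$. You simply invoke the polar decomposition directly rather than building it from the SVD, and your closing remark that nonsingularity of $\bm{Q}_1$ is what guarantees strict positive definiteness of $\bm{P}$ is exactly the right point to flag.
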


\begin{proof}
The direction $l\left[\mathcal{V}^{+}(k,p)\right] \subseteq l\left[\mathcal{V}^{\text{N}}(k,p)\right]$ is immediate. Now, let $S \in l\left[\mathcal{V}^{\text{N}}(k,p)\right].$ There must exist $\bm{Q} = \left[\bm{Q}_1^T \, \bm{Q}_2^T\right]^T \in \mathcal{V}^{\text{N}}(k,p)$ having $S$ as its column space. Let $\bm{Q}_1 = \bm{U}\bm{D}\bm{V}^T$ be the singular value decomposition of $\bm{Q}_1$ and set $\bm{Q}' = \bm{Q}\bm{V}\bm{U}^T.$ Then  $l\left(\bm{Q}'\right) = S$ because $\bm{V}\bm{U}^T \in \mathcal{O}(k)$ and $\bm{Q}'\in \mathcal{V}^{+}(k,p)$ because its square top block $\bm{U}\bm{D}\bm{U}^T$ is symmetric positive definite. Thus $S \in l\left[\mathcal{V}^{+}(k,p)\right]$ and we conclude that $l\left[\mathcal{V}^{\text{N}}(k,p)\right] \subseteq l\left[\mathcal{V}^{+}(k,p)\right].$
\end{proof} \noindent Recall that the measure $P_{\mathcal{G}(k,p)}$ is the pushforward of $P_{\mathcal{V}(k,p)}$ by $l,$ i.e. for a subset $A \subset \mathcal{G}(k,p)$ we have $P_{\mathcal{G}(k,p)}(A) = P_{\mathcal{V}(k,p)}\left[l^{-1}(A)\right].$ Thus,
\begin{align*}
    P_{\mathcal{G}(k,p)}\left\{l\left[\mathcal{V}^{+}(k,p)\right]\right\} &= P_{\mathcal{G}(k,p)}\left\{l\left[\mathcal{V}^{\text{N}}(k,p)\right]\right\} \\ 
    &= P_{\mathcal{V}(k,p)}\left\{\mathcal{V}^{\text{N}}(k,p)\right\} \\ 
    &=1. 
\end{align*}

\subsection{Proof of Proposition \ref{cayley_1to1}}

We begin with $\bm{Q} =[\bm{Q}_1^T \, \bm{Q}_2^T] \in \mathcal{V}^{+}(k,p)$ and we want to verify that the matrix $\bm{A}$ obtained by equations \ref{invgrassF}-\ref{invgrassA} satisfies $\text{eval}_i(\bm{A}^T\bm{A}) \in [0,1)$ for $1 \leq i \leq k.$  Let $\bm{Q}_1 = \bm{V}\text{diag}(\lambda_1, ..., \lambda_k)\bm{V}^T$ be the eigendecomposition of $\bm{Q}_1.$ That each eigenvalue is positive follows from the condition $\bm{Q}_1 \succ \bm{0}.$ We know that each eigenvalue is less than or equal to one because 
\begin{align*}
    \bm{0}_k &\preceq \bm{Q}_2^T\bm{Q}_2 \\ 
    &= \bm{I}_k - \bm{Q}_1^T\bm{Q}_1\\
    &= \bm{V}\text{diag}\left(1-\lambda_1^2, ..., 1-\lambda_k^2\right)\bm{V}^T. 
\end{align*} Therefore, $\lambda_i \in (0,1]$ for each $i.$ As in equations \ref{invgrassF}-\ref{invgrassA}, set
\begin{align*}
    \bm{F} &= (\bm{I}_k-\bm{Q}_1)(\bm{I}_k+\bm{Q}_1)^{-1} \\ 
    \bm{A} &= \frac12 \bm{Q}_2 (\bm{I}_k + \bm{F}).
\end{align*} We can write $\bm{A}^T\bm{A}$ as 
\begin{align*}
    \bm{A}^T\bm{A} &= \frac14 (\bm{I}_k + \bm{F})^T \bm{Q}_2^T \bm{Q}_2(\bm{I}_k + \bm{F}) \\ 
    &= \frac14 (\bm{I}_k + \bm{F})^T \left(\bm{I}_k - \bm{Q}_1^T \bm{Q}_1\right)(\bm{I}_k + \bm{F})
\end{align*}
from which it follows that 
\begin{align*}
    \text{eval}_i(\bm{A}^T\bm{A}) &= \frac14\left(1-\lambda_i^2\right)\left(1 + \frac{1-\lambda_i}{1+\lambda_i}\right)^2
\end{align*} for each $i.$ Since the eigenvalues of $\bm{Q}_1$ lie in the interval $(0,1],$ the eigenvalues $\bm{A}^T\bm{A}$ lie in the interval $[0,1).$ 

Now we start with $\bm{A}$ such that $\delta_i = \text{eval}_i(\bm{A}^T\bm{A}) \in [0,1)$ for each $i$ and we want to check that $\bm{Q}_1 = (\bm{I}_k - \bm{A}^T\bm{A})(\bm{I}_k + \bm{A}^T\bm{A})^{-1} \succ \bm{0}.$ Let $\bm{A}^T\bm{A} = \bm{W} \text{diag}(\delta_1, ..., \delta_k)\bm{W}^T$ be the eigendecomposition of $\bm{A}^T\bm{A}.$ Then 
\begin{align*}
    \bm{Q}_1 &= \bm{W} \text{diag}\left(\frac{1-\delta_1}{1+\delta_1}, ..., \frac{1-\delta_k}{1+\delta_k}\right)\bm{W}^T \succ \bm{0}.
\end{align*} 

\subsection{Proof of Proposition \ref{jacobian_grassmann}}

The proof is nearly identical to that of Proposition \ref{jacobian_stiefel}. We simply replace $\bm{\varphi}$ with $\bm{\psi}$ and $\bm{\Gamma}_{\mathcal{V}}$ with $\bm{\Gamma}_{\mathcal{G}}.$ 

\subsection{Proof of Proposition \ref{normal_apprx_prop} part (i)} 

Denote the square top block of $\bm{Q}_p$ by $\bm{Q}_{p,1}$ and the bottom block by $\bm{Q}_{p,2}.$ Define matrices
\begin{align*} 
    \bm{F}_p &= (\bm{I}_{k_p} - \bm{Q}_{p,1})(\bm{I}_{k_p} + \bm{Q}_{p,1})^{-1} \\ 
    \bm{B}_p &= \frac12 (\bm{F}_p^T - \bm{F}_p) \\ 
    \bm{A}_p &= \frac12 \bm{Q}_{p,2} (\bm{I}_{k_p} + \bm{F}_p) 
\end{align*} as in equations \ref{inverse_eqn_F}-\ref{inverse_eqn_A}. Let $\bm{b}_p$ be the vector of independent entries of $\bm{B}_p$ obtained by eliminating diagonal and supradiagonal elements from $\vect{\bm{B}_p}.$  When the Frobenius norm $\|\bm{Q}_{p,1}\|_{F}$ is less than one, the matrices admit series representations: 
 \begin{align*}
     \bm{F}_p &= \bm{I}_{k_p}  - 2\bm{Q}_{p,1} + 2\bm{Q}_{p,1}^2 - 2\bm{Q}_{p,1}^3 + ... \\
     \bm{B}_p &= (\bm{Q}_{p,1} - \bm{Q}_{p,1}^T) - (\bm{Q}_{p,1}^2 - \bm{Q}_{p,1}^{2T}) + (\bm{Q}_{p,1}^3 - \bm{Q}_{p,1}^{3T}) - ... \\ 
     \bm{A}_p &= \bm{Q}_{p,2} - \bm{Q}_{p,2} \bm{Q}_{p,1} + \bm{Q}_{p,2} \bm{Q}_{p,1}^2 - \bm{Q}_2 \bm{Q}_{p,1}^3 + ... 
 \end{align*} It follows that 
\begin{align*}
     \widetilde{\bm{B}}_p(\bm{Q}_p) - \bm{B}_p &= \sum_{i=2}^{\infty} (-1)^{i} \left(\bm{Q}^{i}_{p,1} - \bm{Q}^{iT}_{p,1}\right) \\
     \widetilde{\bm{A}}_p(\bm{Q}_p) - \bm{A}_p &=  \sum_{i=1}^{\infty} (-1)^{i-1}\bm{Q}_{p,2}\bm{Q}^{i}_{p,1}
\end{align*} when $\|\bm{Q}_{p,1}\|_{F} < 1.$ See \citet{Hubbard2009} for a discussion of matrix geometric series.

Inequalities relating the Frobenius and max norms will prove useful. Let $\bm{W}_1, \bm{W}_2$ be $d_1 \times d_2$ and $d_2 \times d_3$ dimensional matrices, respectively. Then
\begin{align*}
    \|\bm{W}_1\|_{F} &\leq \sqrt{d_1 d_2} \|\bm{W}_1\|_{\max} \\ 
    \|\bm{W}_1\bm{W}_2\|_{\max} &\leq d_2 \|\bm{W}_1\|_{\max} \|\bm{W}_2\|_{\max}.
\end{align*}
The first inequality implies that the condition $\|\bm{Q}_{p,1}\|_F < 1$ under which our series representations converge is satisfied when $k_p\|\bm{Q}_{p,1}\|_{\max} < 1.$ The second inequality implies that ${\|\bm{Q}_{p,1}^i\|_{\max} \leq k_p^{i-1} \|\bm{Q}_{p,1}\|_{\max}^i}$ for each natural number $i.$

We will also need the following somewhat technical lemma: 
\begin{lemma} \label{lemma} The following quantities, which will appear in later inequalities, go to zero in probability as $p$ grows provided that ${k_p=o\left(\frac{p^{1/4}}{\sqrt{\log{p}}}\right):}$
\begin{align*}
    &(i)\quad \|p^{1/2}\bm{Q}_p - \bm{Z}_p \|_{\max}  &&(ii)\quad  k_p\|\bm{Q}_{p,1}\|_{\max}  \\ 
    &(iii)\quad  k_p^2 p^{1/2}\|\bm{Q}_{p,1}\|_{\max}^2  &&(iv)\quad  k_p p^{1/2}\|\bm{Q}_{p}\|_{\max}^2.  \\ 
\end{align*}
\end{lemma}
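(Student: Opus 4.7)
My plan has three steps: dispatch part (i) by citing \citet{Jiang2006} directly, then establish a uniform max-norm bound on $\bm{Q}_p$ via the Jiang coupling, and finally plug that bound into (ii)--(iv) and verify the rates.

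\textit{Step 1 (Part (i)).} The hypothesis $k_p = o(p^{1/4}/\sqrt{\log p})$ is strictly stronger than $k_p = o(p/\log p)$ for large $p$. Under the latter, Theorem 3 of \citet{Jiang2006} delivers $\|p^{1/2}\bm{Q}_p - \bm{Z}_p\|_{\max} \to 0$ in probability. This gives (i), and it is also the key ingredient for the remaining parts.

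\textit{Step 2 (A max-norm bound on $\bm{Q}_p$).} Since $\bm{Q}_{p,1}$ is a submatrix of $\bm{Q}_p$, $\|\bm{Q}_{p,1}\|_{\max} \leq \|\bm{Q}_p\|_{\max}$, so it suffices to bound the latter. By the triangle inequality,
$$p^{1/2}\|\bm{Q}_p\|_{\max} \leq \|\bm{Z}_p\|_{\max} + \|p^{1/2}\bm{Q}_p - \bm{Z}_p\|_{\max}.$$
The second term is $o_p(1)$ by Step 1. For the first, $\bm{Z}_p$ has $pk_p$ i.i.d.\ $\mathcal{N}(0,1)$ entries, so a standard Gaussian maximal inequality yields $\|\bm{Z}_p\|_{\max} = O_p(\sqrt{\log(pk_p)}) = O_p(\sqrt{\log p})$, using $k_p \leq p$. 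Combining, $\|\bm{Q}_p\|_{\max} = O_p(p^{-1/2}\sqrt{\log p})$.

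\textit{Step 3 (Plug in and check rates).} Substituting the bound from Step 2 yields $k_p\|\bm{Q}_{p,1}\|_{\max} = O_p(k_p p^{-1/2}\sqrt{\log p})$ for (ii); $k_p^2 p^{1/2}\|\bm{Q}_{p,1}\|_{\max}^2 = O_p(k_p^2 p^{-1/2}\log p)$ for (iii); and $k_p p^{1/2}\|\bm{Q}_p\|_{\max}^2 = O_p(k_p p^{-1/2}\log p)$ for (iv). Under $k_p = o(p^{1/4}/\sqrt{\log p})$ each of these is $o_p(1)$. Part (iii) is the binding constraint, requiring $k_p^2 \log p = o(p^{1/2})$, which is precisely the growth rate in the hypothesis; (ii) and (iv) then follow with room to spare.

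\textit{Expected obstacle.} There is no real conceptual difficulty once Theorem 3 of \citet{Jiang2006} is invoked; the main care required is careful bookkeeping of the $\sqrt{\log p}$ factor produced by the Gaussian maximal inequality against the $p^{-1/2}$ normalization and the hypothesized rate of $k_p$. The only minor check is that $\sqrt{\log(pk_p)}$ is of the same order as $\sqrt{\log p}$, which is immediate from $k_p \leq p$.
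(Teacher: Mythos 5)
Your proposal is correct and takes essentially the same route as the paper: cite Theorem 3 of \citet{Jiang2006} for part (i), use the triangle inequality $p^{1/2}\|\bm{Q}_p\|_{\max} \le \|p^{1/2}\bm{Q}_p - \bm{Z}_p\|_{\max} + \|\bm{Z}_p\|_{\max}$, control the Gaussian term via a maximal inequality to get $O(\sqrt{\log p})$, and combine with the hypothesized rate of $k_p$. The only cosmetic difference is that the paper first observes (ii)--(iv) are each dominated by $k_p^2 p^{1/2}\|\bm{Q}_p\|_{\max}^2$ or its square root, so it suffices to handle a single master quantity, whereas you track the three rates separately; the substance is identical.
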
 
\begin{proof}
Suppose ${k_p=o\left(\frac{p^{1/4}}{\sqrt{\log{p}}}\right).}$ This implies that ${k_p=o\left(\frac{p}{\log{p}}\right)}$ and quantity (i) goes to zero in probability by Theorem 3 of \citet{Jiang2006}. The quantities (ii)-(iv) are nonnegative and each is smaller than either $k_p^2\, p^{1/2}\|\bm{Q}_{p}\|^2_{\max}$ or its square root. If we can show that either this quantity or its square root goes to zero in probability, we are done. We have the following upper bound for the square root:
\begin{align*}
    k_p\, p^{1/4} \left\|\bm{Q}_p\right\|_{\max} &= k_p \,p^{1/4}\left\|\bm{Q}_p -p^{-1/2}\bm{Z}_p + p^{-1/2}\bm{Z}_p\right\|_{\max} \\ 
    &\leq k_p \,p^{1/4} \left(\left\|\bm{Q}_p -p^{-1/2}\bm{Z}_p\right\|_{\max} + \left\|p^{-1/2}\bm{Z}_p\right\|_{\max}\right) \\
    &= k_p \,p^{-1/4} \left\|p^{1/2}\bm{Q}_p -\bm{Z}_p\right\|_{\max}  + k_p\, p^{-1/4}\left\|\bm{Z}_p\right\|_{\max}.
\end{align*} We know that the first summand goes to zero in probability by the condition on $k_p$ and the fact that quantity (i) goes to zero in probability. Using a well-known inequality involving the maximum of independent standard normal random variables (see Problem 5.1 in \citet{VanHandel2014}), we obtain the following bound on the expected value of the second summand: 
\begin{align*}
    k_p\, p^{-1/4}\mathbbm{E}\left[\left\|\bm{Z}_p\right\|_{\max}\right] 
    &\leq k_p\, p^{-1/4} \sqrt{2 \log{pk_p}} \\ 
    &\leq k_p\, p^{-1/4} \sqrt{2 \log{p^2}} \\ 
    &=2 \frac{k_p}{\left(\frac{p^{1/4}}{\sqrt{\log{p}}}\right)}.
\end{align*} The condition on $k_p$ implies that the expectation of the second summand goes to zero. We conclude that the second summand goes to zero in mean and thus in probability. We have shown that an upper bound for the square root of $k_p^2\, p^{1/2}\|\bm{Q}_{p}\|_{\max}^2$ goes to zero in probability, which is sufficient to prove the lemma. 
\end{proof}

Now set $a_p = \|\bm{\Pi}_p\widetilde{\bm{\varphi}}_p - \bm{\Pi}_p\bm{\varphi}_p\|_{\infty},$ assume that ${k_p=o\left(\frac{p^{1/4}}{\sqrt{\log{p}}}\right)},$ and let $\epsilon > 0$ be given.  We want to show that $\text{Pr}\{a_p > \epsilon \} \to 0$ as $p$ gets large. We express this probability as 
\begin{align*}
    \text{Pr}\{a_p > \epsilon \} = \, &\text{Pr}\left\{a_p > \epsilon \, \mid \, k_p\|\bm{Q}_{p,1}\|_{\max} < 1\right\}\text{Pr}\left\{k_p\|\bm{Q}_{p,1}\|_{\max} < 1\right\} + \\ 
    & \text{Pr}\left\{a_p > \epsilon \, \mid \, k_p\|\bm{Q}_{p,1}\|_{\max} \geq 1\right\}\text{Pr}\left\{k_p\|\bm{Q}_{p,1}\|_{\max} \geq 1\right\} \\ 
    \leq \, &\text{Pr}\left\{a_p > \epsilon \, \mid \, k_p\|\bm{Q}_{p,1}\|_{\max} < 1\right\} + 
    \text{Pr}\left\{k_p\|\bm{Q}_{p,1}\|_{\max} \geq 1\right\}.
\end{align*} Lemma \ref{lemma} implies that $\text{Pr}\{ k_p\|\bm{Q}_{p,1}\|_{\max} \geq 1\}$ goes to zero. It follows that ${\text{Pr}\{a_p > \epsilon \}}$ goes to zero if ${\text{Pr}\left\{a_p > \epsilon \, \mid \, k_p\|\bm{Q}_{p,1}\|_{\max} < 1\right\}}$ does. Therefore, we only need to verify this condition. 

For each $p,$ we have
\begin{align*}
    a_p &= \|\bm{\Pi}_p\widetilde{\bm{\varphi}}_p - \bm{\Pi}_p\bm{\varphi}_p\|_{\infty} \\ 
    &= 
    \left\|
    \begin{bmatrix}
           \sqrt{p/2}\, \widetilde{\bm{b}}_p(\bm{Q}_p) \\ 
           \sqrt{p} \vect{\widetilde{\bm{A}}_p}(\bm{Q}_p)
    \end{bmatrix} - 
    \begin{bmatrix}
           \sqrt{p/2}\, \bm{b}_p \\ 
           \sqrt{p} \vect{\bm{A}_p}                    
    \end{bmatrix}
    \right\|_{\infty} \\ 
    &= \max\left\{\sqrt{p/2}\, \left\|\widetilde{\bm{B}}_p(\bm{Q}_p) - \bm{B}_p\right\|_{\max} \, , \, 
    \sqrt{p}\, \left\|\widetilde{\bm{A}}_p(\bm{Q}_p) - \bm{A}_p\right\|_{\max}
    \right\} \\ 
    &\leq \sqrt{p/2}\, \left\|\widetilde{\bm{B}}_p(\bm{Q}_p) - \bm{B}_p\right\|_{\max}  \, + \, \sqrt{p} \left\|\widetilde{\bm{A}}_p(\bm{Q}_p) - \bm{A}_p\right\|_{\max}.
\end{align*} When $k_p\|\bm{Q}_{p,1}\|_{\max} < 1,$ we have
\begin{align*}
    \sqrt{p/2}\left\|\widetilde{\bm{B}}_p(\bm{Q}_p) - \bm{B}_p\right\|_{\max} &= \sqrt{p/2}\left\|\sum_{i=2}^{\infty} (-1)^i \left(\bm{Q}^{i}_{p,1} - \bm{Q}^{iT}_{p,1}\right)\right\|_{\max} \\ 
    &\leq\sqrt{p/2}\sum_{i=2}^{\infty}  \left\|\bm{Q}^{i}_{p,1} - \bm{Q}^{iT}_{p,1}\right\|_{\max} \\
    &\leq \sqrt{p/2} \sum_{i=2}^{\infty} 2  \left\|\bm{Q}^{i}_{p,1}\right\|_{\max} \\
    &= \sqrt{2p} \sum_{i=2}^{\infty}  \left\|\bm{Q}^{i}_{p,1}\right\|_{\max} \\
    &\leq\sqrt{2p} \sum_{i=2}^{\infty}  k_p^{i-1}\left\|\bm{Q}_{p,1}\right\|_{\max}^i \\
    &= \sqrt{2p} \sum_{n=i}^{\infty}  \left(k_p^\frac{i-1}{i}\left\|\bm{Q}_{p,1}\right\|_{\max}\right)^i \\
    &\leq \sqrt{2p} \sum_{i=2}^{\infty}  \left(k_p\left\|\bm{Q}_{p,1}\right\|_{\max}\right)^i \\
    &= \sqrt{2p}\left(\frac{1}{1 - k_p\left\|\bm{Q}_{p,1}\right\|_{\max}} -  k_p\left\|\bm{Q}_{p,1}\right\|_{\max} - 1 \right) \\ 
    &= \sqrt{2}\frac{k_p^2 \sqrt{p}\left\|\bm{Q}_{p,1}\right\|_{\max}^2 }{1 - k_p\left\|\bm{Q}_{p,1}\right\|_{\max}} 
\end{align*} and  
\begin{align*}
    \sqrt{p}\left\|\widetilde{\bm{A}}_p(\bm{Q}_p) - \bm{A}_p\right\|_{\max} &= \sqrt{p}\left\|\sum_{i=1}^{\infty} (-1)^{i-1}\bm{Q}_{p,2}\bm{Q}^{i}_{p,1}\right\|_{\max} \\
    &\leq \sqrt{p} \sum_{n=1}^{\infty} \left\|\bm{Q}_{p,2}\bm{Q}^n_{p,1} \right\|_{\max} \\
    &\leq \sqrt{p} \sum_{n=1}^{\infty} k_p \left\|\bm{Q}_{p,2} \right\|_{\max} \left\|\bm{Q}^n_{p,1} \right\|_{\max} \\
    &\leq \sqrt{p} \left\|\bm{Q}_{p,2} \right\|_{\max} \sum_{n=1}^{\infty} \left(k_p \left\|\bm{Q}_{p,1} \right\|_{\max}\right)^n \\
    &= \sqrt{p}\left\|\bm{Q}_{p,2}\right\|_{\max} \left( \frac{1}{1 - k_p\left\| \bm{Q}_{p,1} \right\|_{\max}} - 1 \right)\\
    &= \sqrt{p}\left\|\bm{Q}_{p,2}\right\|_{\max} \left( \frac{k_p\left\| \bm{Q}_{p,1} \right\|_{\max}}{1 - k_p\left\| \bm{Q}_{p,1} \right\|_{\max}} \right) \\ 
    &\leq \frac{k_p \sqrt{p} \max\left\{\left\| \bm{Q}_{p,1} \right\|_{\max}, \left\|\bm{Q}_{p,2}\right\|_{\max}\right\}^2} {1 - k_p\left\| \bm{Q}_{p,1} \right\|_{\max}} \\ 
    &= \frac{k_p \sqrt{p} \left\| \bm{Q}_{p} \right\|_{\max}^2} {1 - k_p\left\| \bm{Q}_{p,1} \right\|_{\max}}.
\end{align*} Thus, the upper bound
\begin{align*}
    a_p &\leq \sqrt{p/2}\, \left\|\widetilde{\bm{B}}_p(\bm{Q}_p) - \bm{B}_p\right\|_{\max}  \, + \, \sqrt{p} \left\|\widetilde{\bm{A}}_p(\bm{Q}_p) - \bm{A}_p\right\|_{\max} \\ 
    &\leq \sqrt{2}\frac{k_p^2 \sqrt{p}\left\|\bm{Q}_{p,1}\right\|_{\max}^2 }{1 - k_p\left\|\bm{Q}_{p,1}\right\|_{\max}}  + \frac{k_p \sqrt{p} \left\| \bm{Q}_{p} \right\|_{\max}^2} {1 - k_p\left\| \bm{Q}_{p,1} \right\|_{\max}} := u_p
\end{align*} is valid when $k_p\|\bm{Q}_{p,1}\|_{\max} < 1.$ Then
\begin{align*}
    \Pr\left\{a_p > \epsilon\, \mid \,  k_p\|\bm{Q}_{p,1}\|_{\max} < 1 \right\} &\leq \Pr\left\{u_p > \epsilon\, \mid\,  k_p\|\bm{Q}_{p,1}\|_{\max} < 1\right\} \\ 
    &= \frac{\Pr\left\{u_p > \epsilon\, \cap \, k_p\|\bm{Q}_{p,1}\|_{\max} < 1 \right\}}{\Pr\left\{k_p\|\bm{Q}_{p,1}\|_{\max} < 1\right\}} \\
    &\leq 
     \frac{\Pr\left\{ u_p > \epsilon \right\} }{\Pr\left\{k_p\|\bm{Q}_{p,1}\|_{\max} < 1\right\}}.
\end{align*} Because $\Pr\left\{ k_p\|\bm{Q}_{p,1}\|_{\max} < 1\right\} \to 1,$ we only need to show $\Pr\left\{u_p > \epsilon \right\} \to 0$ as $p$ grows. Since $\epsilon$ is arbitrary, this is equivalent to showing that $u_p$ goes to zero in probability, which follows from the continuous mapping theorem and Lemma \ref{lemma}. We have shown that $ \Pr\left\{a_p > \epsilon\, \mid \,  k_p\|\bm{Q}_{p,1}\|_{\max} < 1 \right\}$ goes to zero as $p$ gets large, which is sufficient to prove part (i) of the proposition. 

\subsection{Proof of Proposition \ref{normal_apprx_prop} part (ii)}
Assume that $k_p  = o\left(\frac{p}{\log{p}}\right).$ For each $p,$ we have: 
\begin{align*}
    \|\bm{\Pi}_p\widetilde{\bm{\varphi}}_p - \bm{z}_p\|_{\infty} &= \|\bm{\Pi}_p \widetilde{C}_p^{-1}(\bm{Q}_{p}) - \bm{\Pi}_p \,  \widetilde{C}_p^{-1}(p^{-1/2}\bm{Z}_{p})\|_{\infty} \\
    &= 
    \left\|
    \begin{bmatrix}
    \sqrt{p/2} \widetilde{b}_p(\bm{Q}_p) \\  
    \sqrt{p} \vect{\widetilde{A}_p(\bm{Q}_p)}
    \end{bmatrix} - 
    \begin{bmatrix}
     \sqrt{p/2} \widetilde{b}_p(p^{-1/2}\bm{Z}_p) \\  
     \sqrt{p} \vect{\widetilde{A}_p}(p^{-1/2}\bm{Z}_p)                   
    \end{bmatrix}
    \right\|_{\infty} \\
    &= \max\left\{
    \sqrt{p/2}
    \left\| 
     \widetilde{B}_p(\bm{Q}_p) - \widetilde{B}_p(p^{-1/2}\bm{Z}_p) 
    \right\|_{\max} , \right. \\ 
     &\left. \hspace{1.5cm} \sqrt{p}
    \left\| 
     \widetilde{A}_p(\bm{Q}_p) - \widetilde{A}_p(p^{-1/2}\bm{Z}_p)
    \right\|_{\max}
    \right\} \\
    &\leq 
    \sqrt{p/2}
    \left\| 
     \widetilde{B}_p(\bm{Q}_p) - \widetilde{B}_p(p^{-1/2}\bm{Z}_p) 
    \right\|_{\max} + \\ 
    &\quad \sqrt{p}
    \left\| 
     \widetilde{A}_p(\bm{Q}_p) - \widetilde{A}_p(p^{-1/2}\bm{Z}_p)
    \right\|_{\max} \\ 
    &=
    \sqrt{p/2}
    \left\| 
    \bm{Q}_{p,1} - p^{-1/2}\bm{Z}_{p,1} + p^{-1/2}\bm{Z}_{p,1}^T - \bm{Q}_{p,1}^T
    \right\|_{\max} + \\ 
    & \quad \,\, \sqrt{p}
    \left\| 
     \bm{Q}_{p,2} - p^{-1/2}\bm{Z}_{p,2}
    \right\|_{\max} \\ 
    &\leq 
    \sqrt{p/2} \left(
    \left\| \bm{Q}_{p,1} - p^{-1/2}\bm{Z}_{p,1} \right\| + \left\|p^{-1/2}\bm{Z}_{p,1}^T - \bm{Q}_{p,1}^T \right\|_{\max} 
    \right) + \\ 
    & \quad \,\, \sqrt{p}
    \left\| 
     \bm{Q}_{p,2} - p^{-1/2}\bm{Z}_{p,2}
    \right\|_{\max} \\ 
    &= 2\sqrt{p/2}
    \left\| \bm{Q}_{p,1} - p^{-1/2}\bm{Z}_{p,1} \right\|_{\max} + 
    \sqrt{p} \left\| \bm{Q}_{p,2} - p^{-1/2}\bm{Z}_{p,2} \right\|_{\max} \\ 
    &= \sqrt{2} \left\| \sqrt{p}\bm{Q}_{p,1} - \bm{Z}_{p,1} \right\|_{\max} + 
    \left\| \sqrt{p}\bm{Q}_{p,2} - \bm{Z}_{p,2} \right\|_{\max} \\
    &\leq \sqrt{2} \left\| \sqrt{p}\bm{Q}_{p} - \bm{Z}_{p} \right\|_{\max} + 
    \left\| \sqrt{p}\bm{Q}_{p} - \bm{Z}_{p} \right\|_{\max} \\
    &= (\sqrt{2} + 1) \left\| \sqrt{p}\bm{Q}_{p} - \bm{Z}_{p} \right\|_{\max}.
\end{align*} Theorem 3 of \citet{Jiang2006} implies that this upper bound goes to zero in probability as $p$ grows. Therefore, the quantity $\|\bm{\Pi}_p\widetilde{\bm{\varphi}}_p - \bm{z}_p\|_{\infty}$ does as well. 

\iffalse
All that remains is to verify that $\bm{z}_p \sim N(\bm{0}, \bm{I}_{d_{\mathcal{V}}}).$ This is straightforward: 
\begin{align*}
    \bm{z}_p &= \bm{\Pi}_p \,  \widetilde{\text{Cay}}_{\mathcal{V}}^{-1}(p^{-1/2}\bm{Z}_{p})\\
    &= \bm{\Pi}_p 
    \begin{bmatrix}
    \widetilde{b}(p^{-1/2} \bm{Z}_{p,1}) \\ 
    \vect{\widetilde{A}(p^{-1/2} \bm{Z}_{p,2})}
    \end{bmatrix} \\
    &= 
    \begin{bmatrix}
    \widetilde{b}(p^{-1/2} \bm{Z}_{p,1}) \\ 
    \vect{\bm{Z}_{p,2}}
    \end{bmatrix}.
\end{align*} The vector $\widetilde{b}(2^{-1/2} \bm{Z}_{p,1})$ contains the subdiagonal entries of the matrix $$\widetilde{B}(2^{-1/2} \bm{Z}_{p,1}) = 2^{-1/2}\bm{Z}_{p,1} - 2^{-1/2}\bm{Z}_{p,1}^T,$$ which are differences of independent mean zero normal random variables having variance $1/2.$ Thus, $\widetilde{b}(p^{-1/2} \bm{Z}_{p,1})$ contains independent standard normal random variables. The entries of $\vect{\bm{Z}_{p,2}}$ are clearly independent standard normal random variables which are independent of $\widetilde{b}(2^{-1/2} \bm{Z}_{p,1}).$ Therefore, $\bm{z}_p \sim N(\bm{0}, \bm{I}_{d_{\mathcal{V}}}).$ 
\fi

\section{Special matrices} \label{special_matrices}

In constructing the linear transformations given in Lemmas \ref{lintran_stiefel} and \ref{lintran_grassmann}, we rely upon two special matrices: the commutation matrix $\bm{K}_{m,n}$ and the matrix $\widetilde{\bm{D}}_n.$ An early reference related to the matrix $\bm{K}_{m,n}$ is \citet{Magnus1979}, while the matrix $\widetilde{\bm{D}}_n$ was introduced in \citet{Neudecker1983}. Our presentation follows that of \citet{Magnus1988LS}. 

\subsection{The commutation matrix $\bm{K}_{m,n}$}
Let $\bm{A}$ be an $m\times n$ matrix and $B$ be a $p\times q$ matrix. The commutation matrix $\bm{K}_{m,n}$ is the unique $mn \times mn$ permutation matrix with the property that $\bm{K}_{m,n} \vect{A} = \vect{A^T}.$ The critical property of the commutation matrix is that it allows us to exchange the order of the matrices in a Kronecker product.  Theorem 3.1 of \citet{Magnus1988LS} states that $\bm{K}_{p,m}(\bm{A} \otimes \bm{B}) = (\bm{B} \otimes \bm{A})\bm{K}_{q,n}.$ Section 3.3 of \citet{Magnus1988LS} gives an explicit expression for the commutation matrix $\bm{K}_{m,n}.$ Let $\bm{H}_{i,j}$ be the $m\times n$ matrix having a $1$ in the $i,j$th position and zeros everywhere else. Then Theorem 3.2 of \citet{Magnus1988LS} tells us that
\begin{align*}
    \bm{K}_{m,n} &= \sum_{i=1}^{m} \sum_{j=1}^n \left(\bm{H}_{i,j} \otimes \bm{H}_{i,j}^T\right).
\end{align*}

\subsection{The matrix $\widetilde{\bm{D}}_n$}

The matrix $\widetilde{\bm{D}}_n$ proves useful when differentiating expressions involving skew-symmetric matrices. Let $\bm{A}$ be an $n\times n$ matrix and let $\tilde{v}(A)$ be the $n(n-1)/2$-dimensional vector obtained by eliminating diagonal and supradiagonal elements from the vectorization $\vect{\bm{A}}.$ Definition 6.1 of \citet{Magnus1988LS} defines $\widetilde{\bm{D}}_n$ as the unique $n^2 \times n(n-1)/2$ matrix with the property $\widetilde{\bm{D}}_n \tilde{v}(A) = \vect{\bm{A}}$ for every skew-symmetric matrix $\bm{A}.$ Theorem 6.1 of \citet{Magnus1988LS} gives an explicit expression for $\widetilde{\bm{D}}_n.$ Let $\bm{E}_{i,j}$ be the $n\times n$ matrix with $1$ in the $i,j$th position and zeros everywhere else and set $\tilde{\bm{T}}_{i,j} = \bm{E}_{i,j} - \bm{E}_{j,i}.$ Also, let $\tilde{u}_{i,j}$ be the $n(n-1)/2$-dimensional vector having $1$ in its $(j-1)n + i - j(j+1)/2$ place and zeros everywhere else. Then Theorem 6.1 tells us that 
\begin{align*}
    \widetilde{\bm{D}}_n &= \sum_{i > j} \left(\vect{\tilde{\bm{T}}_{i,j}}\right) \tilde{u}_{i,j}^T. 
\end{align*}

\section{Evaluating the Jacobian terms}

Taking a naive approach to evaluating the Jacobian term $J_{d_{\mathcal{V}}}C(\bm{\varphi})$ becomes prohibitively expensive for even small dimensions. Recall that 
\begin{align*}
    J_{d_{\mathcal{V}}}C(\bm{\varphi}) &= \left|DC(\bm{\varphi})^T DC(\bm{\varphi})\right|^{1/2} \\ 
    &= \left|2^2\, \bm{\Gamma}_{\mathcal{V}}^T \,(\bm{G}_{\mathcal{V}} \otimes \bm{H}_{\mathcal{V}})\, \bm{\Gamma}_{\mathcal{V}}\right|^{1/2}
\end{align*} where
\begin{align*}
    \bm{G}_{\mathcal{V}} &= (\bm{I}_p - \bm{X}_{\bm{\varphi}})^{-1}\bm{I}_{p\times k}\bm{I}_{p\times k}^T (\bm{I}_p - \bm{X}_{\bm{\varphi}})^{-T}\\ 
    \bm{H}_{\mathcal{V}} &= (\bm{I}_p - \bm{X}_{\bm{\varphi}})^{-T}(\bm{I}_p - \bm{X}_{\bm{\varphi}})^{-1}.
\end{align*} The Kronecker product $\bm{G}_{\mathcal{V}} \otimes \bm{H}_{\mathcal{V}}$ has dimension $p^2 \times p^2.$ Evaluating this Kronecker product and computing its matrix product with $\bm{\Gamma}_{\mathcal{V}}$ is extremely costly for large $p.$ In this section, we describe a more efficient approach which takes advantage of the block stucture of the matrices involved. (The Jacobian term $J_{d_{\mathcal{G}}}C(\bm{\psi})$ can be evaluated analogously.) 

Let $\bm{C}_{\mathcal{V}} = \left(\bm{I} - \bm{X}_{\bm{\varphi}}\right)^{-1}.$ Then 
\begin{align*}
    \bm{C}_{\mathcal{V}} &= 
    \begin{bmatrix}
                    \bm{C}_{11} &  \bm{C}_{12} \\ 
                    \bm{C}_{21} &\bm{C}_{22}
    \end{bmatrix} 
\end{align*} where 
\begin{align*}
    \bm{C}_{11} &= (\bm{I}_k - \bm{B} + \bm{A}^T\bm{A})^{-1} 
    &&\bm{C}_{12} = -\bm{C}_{11}\bm{A}^T \\ 
    \bm{C}_{21} &= \bm{A}\bm{C}_{11}  
    &&\bm{C}_{22} = \bm{I}_{p-k} - \bm{A}\bm{C}_{11}\bm{A}^T. \\ 
\end{align*} The blocks of the matrices 
\begin{align*}
    \bm{G}_{\mathcal{V}} = \begin{bmatrix}
    \bm{G}_{11} & \bm{G}_{12} \\ 
    \bm{G}_{21} & \bm{G}_{22}
    \end{bmatrix} \quad
    \bm{H}_{\mathcal{V}} = \begin{bmatrix}
    \bm{H}_{11} & \bm{H}_{12} \\ 
    \bm{H}_{21} & \bm{H}_{22}
    \end{bmatrix}
\end{align*} can be written in terms of the blocks of $\bm{C}_{\mathcal{V}}$ as 
\begin{align*}
    \bm{H}_{11} &= \bm{C}_{11}^T\bm{C}_{11} + \bm{C}_{21}^T \bm{C}_{21}  
    &&\bm{H}_{12} = \bm{C}_{11}^T \bm{C}_{12} + \bm{C}_{21}^T \bm{C}_{22} \\ 
    \bm{H}_{21} &= \bm{C}_{12}^T\bm{C}_{11} + \bm{C}_{22}^T \bm{C}_{21}  
    &&\bm{H}_{22} = \bm{C}_{12}^T \bm{C}_{12} + \bm{C}_{22}^T\bm{C}_{22} \\
\end{align*} and 
\begin{align*}
    \bm{G}_{11} &= \bm{C}_{11}\bm{C}_{11}^T  
    &&\bm{G}_{12} = \bm{C}_{11} \bm{C}_{21}^T \\  
    \bm{G}_{21} &= \bm{C}_{21}\bm{C}_{11}^T 
    &&\bm{G}_{22} = \bm{C}_{21} \bm{C}_{21}^T. \\  
\end{align*} We can express the matrix $DC(\bm{\varphi})^T DC(\bm{\varphi})$ in blocks as
\begin{align*}
     DC(\bm{\varphi})^T DC(\bm{\varphi}) &= 2^2 \bm{\Gamma}_{\mathcal{V}}^T\, (\bm{G}_{\mathcal{V}} \otimes \bm{H}_{\mathcal{V}}) \,\bm{\Gamma}_{\mathcal{V}} \\ 
     &= 2^2\begin{bmatrix}
     \bm{\Omega}_{11} & \bm{\Omega}_{12} \\ 
     \bm{\Omega}_{21} & \bm{\Omega}_{22}
     \end{bmatrix}
\end{align*} where 
\begin{align*}
    \bm{\Omega}_{11} &= \widetilde{\bm{D}}_k^T(\bm{G}_{11} \otimes \bm{H}_{11})\widetilde{\bm{D}}_k \\ 
    \bm{\Omega}_{12} &= \widetilde{\bm{D}}_k^T(\bm{G}_{11} \otimes \bm{H}_{12}) - \widetilde{\bm{D}}_k^T(\bm{G}_{12} \otimes \bm{H}_{11})\bm{K}_{p-k,k} \\ 
    \bm{\Omega}_{21} &= \bm{\Omega}_{12}^T \\ %(\bm{G}_{11} \otimes \bm{H}_{21})\widetilde{\bm{D}}_k - \bm{K}_{k,p-k}(\bm{G}_{21} \otimes \bm{H}_{11})\widetilde{\bm{D}}_k \\ 
    \bm{\Omega}_{22} &= (\bm{G}_{11} \otimes \bm{H}_{22} + \bm{H}_{11} \otimes \bm{G}_{22}) - (\bm{G}_{12} \otimes \bm{H}_{21} + \bm{H}_{12} \otimes \bm{G}_{21})\bm{K}_{p-k,k}.
\end{align*} Then
\begin{align*}
    J_{d_{\mathcal{V}}}C(\bm{\varphi})
    &= \left|2^{2}
    \begin{bmatrix}
     \bm{\Omega}_{11} & \bm{\Omega}_{12} \\ 
     \bm{\Omega}_{21} & \bm{\Omega}_{22}
     \end{bmatrix}
    \right|^{1/2} \\ 
    &= 2^{d_{\mathcal{V}}} \left|\bm{\Omega}_{22}\right| \left|\bm{\Omega}_{11} - \bm{\Omega}_{12}\bm{\Omega}_{22}^{-1}\bm{\Omega}_{21}\right|.
\end{align*} The Kronecker products in the formulas for $\bm{\Omega}_{11}, \bm{\Omega}_{12}, \bm{\Omega}_{21},$ and $\bm{\Omega}_{22}$ are much smaller than $\bm{G}_{\mathcal{V}} \otimes \bm{H}_{\mathcal{V}},$ which leads to a significant computational savings compared to the naive approach. 

\end{document}